\numberwithin{theorem}{section}
\newcommand{\TheTitle}{Analysis of Optimization Algorithms via Integral Quadratic Constraints: Nonstrongly Convex Problems} 
\newcommand{\TheAuthors}{Mahyar Fazlyab, Alejandro Ribeiro, Manfred Morari, and Victor M. Preciado}
\title{{\TheTitle}\thanks{Submitted to the editors DATE.
\funding{This work was supported in part by the NSF under
grants CAREER-ECCS-1651433 and IIS-1447470.}}}
\author{
  Mahyar Fazlyab\thanks{Department of Electrical and Systems Engineering, University of Pennsylvania, Philadelphia, PA
    \email{mahyarfa@seas.upenn.edu}.}
  \and
  Alejandro Ribeiro\footnotemark[2] 
  \and
  Manfred Morari\footnotemark[2]
  \and
  Victor~M.~Preciado\footnotemark[2]
}
\def\dom{\mathrm{dom}}
\numberwithin{equation}{section}
\newtheorem{remark}{Remark}
\begin{document}

\maketitle

\begin{abstract}
	In this paper, we develop a unified framework able to certify both exponential and subexponential convergence rates for a wide range of iterative first-order optimization algorithms. To this end, we construct a family of parameter-dependent nonquadratic Lyapunov functions that can generate convergence rates in addition to proving asymptotic convergence. Using Integral Quadratic Constraints (IQCs) from robust control theory, we propose a Linear Matrix Inequality (LMI) to guide the search for the parameters of the Lyapunov function in order to establish a rate bound. Based on this result, we formulate a Semidefinite Programming (SDP) whose solution yields the best convergence rate that can be certified by the class of Lyapunov functions under consideration. We illustrate the utility of our results by analyzing the gradient method, proximal algorithms and their accelerated variants for (strongly) convex problems.  We also develop the continuous-time counterpart, whereby we analyze the gradient flow and the continuous-time limit of Nesterov's accelerated method. 
\end{abstract}

\begin{keywords}
	Convex optimization, first-order methods, Nesterov's accelerated method, proximal gradient methods, integral quadratic constraints, linear matrix inequality, semidefinite programming.
\end{keywords}

\begin{AMS}
	90C22, 90C25, 90C30, 93C10, 93D99, 93C15
\end{AMS}

\section{Introduction}
The analysis and design of iterative optimization algorithms is a well-established research area in optimization theory. Due to their computational efficiency and global convergence properties, first-order methods are of particular interest, especially in large-scale optimization arising in current machine learning applications. However, these algorithms can be very slow, even for moderately well-conditioned problems. In this direction, accelerated variants of first-order algorithms, such as Polyak's Heavy-ball algorithm \cite{POLYAK19641} or Nesterov's accelerated method \cite{nesterov1983method}, have been developed to speed up the convergence in ill-conditioned and nonstrongly convex problems. 

In numerical optimization, convergence analysis is an integral part of algorithm tuning and design. This task, however, is often pursued on a case-by-case basis and the analysis techniques heavily depend on the particular algorithm under study, as well as the underlying assumptions. However, by interpreting iterative algorithms as feedback dynamical systems, it is possible to incorporate tools from control theory to analyze and design these algorithms in a more systematic and unified manner \cite{hu2017control,wang2011control,feijer2010stability,wang2010controlapproach}. Moreover, control techniques can be exploited to address more complex tasks, such as analyzing robustness against uncertainties, deriving nonconservative worst-case bounds, and providing convergence guarantees under less restrictive assumptions \cite{cherukuri2016role,hu2017control,lessard2016analysis}.


A universal approach to analyzing the stability of dynamical systems is to construct a Lyapunov function that decreases along the trajectories of the system, proving asymptotic convergence. In the context of iterative optimization algorithms, it is of particular importance to certify a convergence rate in addition to proving asymptotic convergence. Construction of Lyapunov functions that can achieve this goal is not straightforward, especially for nonstrongly convex problems, in which the convergence rate is subexponential. It is important to remark that in a considerable number of applications in machine learning, the underlying optimization problem is not strongly convex \cite{beck2009fast}. 


The goal of the present work is to develop a semidefinite programming (SDP) framework for the construction of Lyapunov functions that can characterize both exponential and subexponential convergence rates for iterative first-order optimization algorithms. The main pillars of our framework are time-varying Lyapunov functions, originally proposed in \cite{su2016differential} for analyzing gradient-based momentum methods \cite{wibisono2016variational,wilson2016lyapunov}, as well as Integral Quadratic Constraints (IQCs) from robust control theory \cite{yakubovich1967frequency,megretski1997system}, which have recently been adapted by Lessard et al. \cite{lessard2016analysis} in the context of optimization algorithms.
Specifically, we propose a family of nonquadratic Lyapunov functions equipped with time-dependent parameters that can establish both exponential and subexponential convergence rates. We then develop an LMI to guide the search for the parameters of the Lyapunov function in order to generate analytical/numerical convergence rates. Based on this result, we formulate an SDP to compute the fastest convergence rate that can be certified by the class of Lyapunov functions under consideration. In this SDP, the properties of the objective function (e.g., convexity, Lipschitz continuity, etc.) can be systematically encoded into the SDP, providing a modular approach to obtaining convergence rates under various regularity assumptions, such as quasiconvexity \cite{hazan2015beyond}, weak quasiconvexity \cite{hardt2016gradient}, quasi-strong convexity \cite{necoara2015linear}, quadratic growth \cite{necoara2015linear}, and Polyak-$\L{}$ojasiewicz condition \cite{karimi2016linear}. Furthermore, we extend our framework to continuous-time settings, in which we analyze the continuous-time limits (by taking infinitesimal stepsizes) of relevant iterative optimization algorithms. We will illustrate the generality of our framework by analyzing several first-order optimization algorithms; namely, unconstrained (accelerated) gradient methods, gradient methods with projection, and (accelerated) proximal methods.

Finally, we consider algorithm design. Specifically, we develop a robust counterpart of the developed LMI whose feasibility provides the algorithm with an additional stability margin in the sense of Lyapunov. As a design experiment, we use the LMI to tune the stepsize and momentum coefficient of Nesterov's accelerated method applied to strongly convex functions, considering robustness as a design criterion.

%
%
%
%

\subsection{Related work} 
There is a host of results in the literature using SDPs to analyze the convergence of first-order optimization algorithms \cite{drori2014performance,taylor2017smooth,taylor2017exact,kim2016optimized}. The first among them is \cite{drori2014performance}, in which Drori and Teboulle developed an SDP to derive analytical/numerical bounds on the worst-case performance of the unconstrained gradient method and its accelerated variant. An extension of this framework to the proximal gradient method--for the case of strongly convex problems--has been recently proposed in \cite{taylor2017exact}. These SDP formulations, despite
being able to yield new performance bounds, are highly algorithm dependent.
To depart from classical algorithmic view, Lessard et. al \cite{lessard2016analysis}  developed an SDP framework based on quadratic Lyapunov functions and IQCs to derive sufficient conditions for exponential stability of an algorithm when the objective function is strongly convex\cite[Theorem 4]{lessard2016analysis}. Specifically, they formulate a small SDP whose feasibility verifies exponential convergence at a specified rate. It is important to remark that Lessard's framework is specifically tailored to analyze strongly convex problems with exponential convergence \cite{lessard2016analysis,nishihara2015general} and subexponential rates cannot be captured. Finally, another related work is by Hu and Lessard \cite{hu2017nesterov}, in which they have independently proposed an LMI framework based on quadratic Lyapunov functions and dissipativity theory to analyze Nesterov's accelerated method. In contrast, the present work, inspired by \cite{lessard2016analysis}, develops an IQC framework using time-dependent nonquadratic Lyapunov functions for the analysis of a broader family of functionals, as well as algorithms involving projections and proximal operators, including the proximal variant of Nesterov's method. 
%


\subsection{Notation and preliminaries}
We denote the set of real numbers by $\mathbb{R}$, the set of real $n$-dimensional vectors by $\mathbb{R}^n$, the set of $m\times n$-dimensional matrices by $\mathbb{R}^{m\times n}$, and the $n$-dimensional identity matrix by $I_n$. We denote by $\mathbb{S}^{n}$, $\mathbb{S}_{+}^n$, and $\mathbb{S}_{++}^n$ the sets of $n$-by-$n$ symmetric, positive semidefinite, and positive definite matrices, respectively. For $M \in \mathbb{R}^{n \times n}$ and $x \in \mathbb{R}^n$, we have that $x^\top M x = \frac{1}{2} x^\top (M+M^\top) x$. The $p$-norm ($p \geq 1$) is displayed by $\|\cdot\|_p \colon \mathbb{R}^n \to \mathbb{R}_{+}$. For two matrices $A \in \mathbb{R}^{m\times n}$ and $B\in \mathbb{R}^{p\times q}$ of arbitrary dimensions, their Kronecker product is given by 
$$
A \otimes B = \begin{bmatrix}
A_{11} B & \cdots & A_{1n}B \\
\vdots & \ddots & \vdots \\ A_{m1}B & \ldots & A_{mn} B
\end{bmatrix}.
$$
Further, we have that $(A\otimes B)^\top =A^\top \otimes B^\top$ and $(AC) \otimes (BD)=(A\otimes B)(C \otimes D)$, for matrices of appropriate dimensions. 
Let $f \colon \mathbb{R}^n \to \mathbb{R} \cup \{+\infty\}$ be a closed proper function. The effective domain of $f$ is denoted by $\dom \, f=\{x \in \mathbb{R}^n \colon f(x)<\infty\}$. The indicator function $\mathbb{I}_{\mathcal{X}} \colon \mathbb{R}^n \to \mathbb{R} \cup \{+\infty\}$ of a closed nonempty convex set $\mathcal{X} \subset \mathbb{R}^n$ is defined as $\mathbb{I}_{\mathcal{X}}(x)=0$ if $x \in \mathcal{X}$, and $\mathbb{I}_{\mathcal{X}}(x)=+\infty$ otherwise. The Euclidean projection of $x \in \mathbb{R}^n$ onto a set $\mathcal{X}$ is denoted by $[x]_{\mathcal{X}}=\mathrm{argmin}_{y\in \mathcal{X}} \|y-x\|_2$. 

\begin{definition}[Smoothness]
	A differentiable function $f \colon \mathbb{R}^d \to \mathbb{R}$ is $L_f$-smooth on $\mathcal{S} \subseteq \dom \, f$ if the following inequality holds.
	\begin{align} \label{eq: Lipschitz continuity}
	\|\nabla f(x)-\nabla f(y)\|_2 \leq L_f \|x-y\|_2 \quad \mbox{for all} \ x,y \in \mathcal{S}.
	\end{align}
	An equivalent definition is that
	\begin{align} \label{eq: Lipschitz continuity function values}
	f(y) \leq f(x) + \nabla f(x)^\top (y-x) + \dfrac{L_f}{2} \|y-x\|_2^2 \quad  \text{for all} \ x,y \in \mathcal{S}.
	\end{align}
\end{definition}

\begin{definition}[Strong convexity]
	A differentiable function $f \colon \mathbb{R}^d \to \mathbb{R}$ is $m_f$-strongly convex on $\mathcal{S} \subseteq \dom \, f$ if the following inequality holds.
	\begin{align} \label{eq: strong convexity}
	m_f \|x-y\|_2^2 \leq (x-y)^\top (\nabla f(x)-\nabla f(y)) \quad  \text{for all} \ x,y \in \mathcal{S}.
	\end{align}
	An equivalent definition is that
	\begin{align} \label{eq: strong convexity function values}
	f(x)+\nabla f(x)^\top (y-x) + \dfrac{m_f}{2} \|y-x\|_2^2  \leq f(y)   \quad  \text{for all}  \ x,y \in \mathcal{S}.
	\end{align}
	
\end{definition}
We denote the class of $L_f$-smooth and $m_f$-strongly convex functions by $\mathcal{F}(m_f,L_f)$. Note that, by setting $m_f=0$, we recover convex functions. For the class $\mathcal{F}(m_f,L_f)$, we denote the condition number by $\kappa_f=L_f/m_f \geq 1$.

\section{Algorithm representation} \label{sec: Algorithm representation}
Iterative algorithms can be represented as linear dynamical systems interacting with one or more static nonlinearities \cite{lessard2016analysis}. The linear part describes the algorithm itself, while the nonlinear components depend exclusively on the first-order oracle of the objective function. In this paper, we consider first-order algorithms that have the following state-space representation,
\begin{align} \label{eq: discrete time dynamics}
\xi_{k+1} &= A_k \xi_{k}  + B_k u_k, \\ \nonumber y_k &= C_k \xi_k, \\ \nonumber u_k &=\phi(y_k), \\ \nonumber x_k &= E_k \xi_k, 
\end{align}
where at each iteration index $k$, $\xi_{k} \in \mathbb{R}^n$ is the state, $u_k \in \mathbb{R}^d$ is the input ($d \leq n$), $y_k \in \mathbb{R}^d$ is the feedback output that is transformed by the nonlinear map $\phi \colon \mathbb{R}^d \to \mathbb{R}^d$ to generate $u_k$, and $x_k \in \mathbb{R}^d$ is the output at which the suboptimality will be evaluated for convergence analysis. See Figure \ref{fig: Fig_block_diagram_discrete} for a block diagram representation.\footnote{Since the input $u =\phi(y)$ is an explicit function of the output, we set the feedforward matrix $D$ to zero in the representation of the linear dynamics to ensure the explicit dependence of the feedback input on the output, i.e., the feedback system is well-posed.}

\begin{figure}[htbp]
	\centering
	\includegraphics[width=0.35\textwidth]{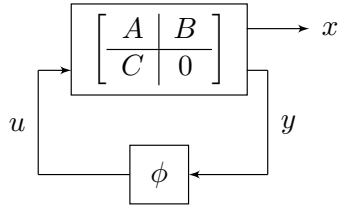}
	\caption{\small Block diagram representation of a first-order algorithm in state-space form.}
	\label{fig: Fig_block_diagram_discrete}
\end{figure}

A broad family of first-order algorithms can be represented in the canonical form \eqref{eq: discrete time dynamics}, where the matrices $(A_k, B_k, C_k,E_k)$ differ for each algorithm. In this representation, the nonlinear feedback component $\phi$ depends on the oracle of the objective function. For instance, in unconstrained smooth minimization problems, we have that $\phi=\nabla f$, where $f$ is the objective function. In composite optimization problems, $\phi$ is the generalized gradient mapping of the composite function, which we will describe in $\S$\ref{sec: Proximal methods}. As an illustration, consider the following recursion defined on the two sequences $\{x_k\} $ and $\{y_k\} $,
\begin{align} \label{eq: universal example}
x_{k+1} &= x_k+\beta_k(x_k-x_{k-1})-h_k \nabla f(y_k), \\
y_k &= x_k+\gamma_k(x_k-x_{k-1}), \nonumber 
\end{align}
where $h_k, \beta_k$ and $\gamma_k$ are nonnegative scalars, $\{x_k\} $ is the primary sequence, and $\{y_k\} $ is the sequence at which the gradient is evaluated. By defining the state vector $\xi_k = [x_{k-1}^\top \ x_k^\top]^\top \in \mathbb{R}^{2d}$, we can represent \eqref{eq: universal example} in the canonical form \eqref{eq: discrete time dynamics}, 
where the matrices $(A_k,B_k,C_k)$ are given by
	\begin{align}
	\left[
	\begin{array}{c|c}
	A_k & B_k \\
	\hline
	C_k & 0
	\end{array}
	\right] = \left[
	\begin{array}{c|c}
	\begin{matrix}
	0 &I_d \\ -\beta_kI_d & (\beta_k+1)I_d
	\end{matrix} & \begin{matrix} 0 \\ -h_kI_d\end{matrix} \\
	\hline
	\begin{matrix} -\gamma_kI_d & (\gamma_k+1)I_d\end{matrix} & 0
	\end{array}
	\right].
	\end{align}
Notice that depending on the selection of $\beta_k$ and $\gamma_k$, \eqref{eq: universal example} describes various existing algorithms. For example, the gradient method corresponds to the case $\beta_k=\gamma_k=0$. In Nesterov's accelerated method, we have $\beta_k=\gamma_k$. Finally, we recover the Heavy-ball method by setting $\gamma_k=0$. 

For an algorithm represented in the canonical form \eqref{eq: discrete time dynamics}, its fixed points (if they exist) are characterized by
\begin{align} \label{eq: fixed points}
\xi_{\star} = A_k \xi_{\star} + B_k u_{\star}, \ \  y_\star = C_k \xi_{\star}, \ \  u_{\star} =\phi(y_\star), \ \ x_{\star}=E_k \xi_{\star} \ \ \mbox{for all } k.
\end{align}
For well-designed algorithms, the fixed-point equation must coincide with the optimality conditions of the underlying optimization problem.


\section{Main results} \label{section: Discrete-Time Gradient Models} 
In this paper, we are concerned with the convergence analysis of first-order algorithms designed to solve optimization problems of the form
\begin{align} \label{eq: main optimization problem}
\mathcal{X}_{\star} =\mathrm{argmin}_{x \in \mathbb{R}^d} \{F(x) = f(x) + g(x)\},
\end{align}
where $f \colon \mathbb{R}^d \to \mathbb{R}$ is closed, proper, and differentiable, while $g \colon \mathbb{R}^d \to \mathbb{R} \cup \{+\infty\}$ is closed convex proper (CCP), and possibly nondifferentiable. Depending on the choice of $f$ and $g$, \eqref{eq: main optimization problem} describes various specialized optimization problems. For instance, when $g(x)=\mathbb{I}_{\mathcal{X}}(x)$ is the indicator function of a nonempty, closed, convex set $\mathcal{X} \subseteq \mathbb{R}^d$, \eqref{eq: main optimization problem} is equivalent to constrained smooth programming; when $g(x) \equiv 0$, we obtain unconstrained smooth programming; and, when $f(x)\equiv 0$, \eqref{eq: main optimization problem} simplifies to an unconstrained nonsmooth optimization problem. In all cases, we assume that the optimal solution set $\mathcal{X}_{\star}$ is nonempty and closed, and the optimal value $F_{\star} = \inf_{x \in \mathbb{R}^d} \ F(x)$ is finite. 

Consider an iterative first-order algorithm, represented in the state-space form \eqref{eq: discrete time dynamics}, that under appropriate initialization solves \eqref{eq: main optimization problem} asymptotically; that is, the sequence of outputs $\{x_k\}$ satisfies $\lim_{k \to \infty} F(x_k)=F(x_{\star})$, where $x_{\star} \in \mathcal{X}_{\star}$.
We assume that the fixed point $y_{\star}$ of the sequence $\{y_k\}$, defined in \eqref{eq: fixed points}, satisfies $y_{\star}=x_{\star}$.	In other words, both $\{x_k\}$ and $\{y_k\}$ are convergent to the same optimal point $x_{\star}$. To establish a rate bound for the algorithm under study, we propose the following Lyapunov function:
%
\begin{align} \label{eq: Lyapunov gradient discrete}
V_k(x,\xi) = a_k(F(x)-F(x_{\star})) + (\xi-\xi_\star)^\top P_k  (\xi-\xi_\star), 
\end{align}
where $a_k \geq 0, \ P_k \in \mathbb{S}_+^n \ \mbox{for all} \ k$, and are to be determined. The first term is the suboptimality of $x$ scaled by $a_k$ and the second term quantifies the suboptimality of the state $\xi$ with respect to the optimal state $\xi_{\star}$. Notice that by this definition, we have that $V_k(x,\xi) \geq 0$ for all $k$, and $V_k(x_{\star},\xi_{\star})=0$, i.e., the Lyapunov function is nonnegative everywhere and zero at optimality.
Suppose we select $\{a_k\}$ and $\{P_k\}$ such that the Lyapunov function becomes nonincreasing along the trajectories of \eqref{eq: discrete time dynamics}, i.e., the following condition holds.
\begin{align} \label{eq: Lyapunov drift}
V_{k+1}(x_{k+1},\xi_{k+1}) \leq V_k(x_k,\xi_k)\quad \text{for all} \ k.
\end{align}
Then, we can conclude $a_k (F(x_k)-F(x_{\star})) \leq V_k(x_k,\xi_k) \leq V_0(x_0,\xi_0)$, or equivalently,
\begin{align} \label{eq: convergence rate discrete}
0 \leq F(x_k)-F(x_{\star}) \leq \dfrac{V_0(x_0,\xi_0)}{a_k} = \mathcal{O}(\frac{1}{a_k}) \quad \text{for all} \ k.
\end{align}
In other words, the sequence $\{a_k\}$ generates an upper bound on the suboptimality or, equivalently, a lower bound on the convergence rate. As a result, the task of certifying a convergence rate for the algorithm translates into finding sufficient conditions to guarantee \eqref{eq: Lyapunov drift}. In the following theorem, we develop an LMI whose feasibility is sufficient for \eqref{eq: Lyapunov drift} to hold.
%
%
%
\begin{theorem}[Main result] \label{thm: main result}
	Let $x_{\star} \in \mathrm{argmin}_{x \in \mathbb{R}^d} \ F(x)$ be a minimizer of $F \colon \mathbb{R}^d \to \mathbb{R} \cup \{+\infty\}$ with a finite optimal value $F(x_{\star})$. Consider an iterative first-order algorithm in the state-space form \eqref{eq: discrete time dynamics}.
	\medskip
	\begin{enumerate}[leftmargin=*]
		\item Suppose the fixed points $(\xi_{\star},u_{\star},y_{\star},x_{\star})$ of  \eqref{eq: discrete time dynamics} satisfy 
		\begin{align} \label{thm: main result 1}
		\xi_{\star} = A_k \xi_{\star} + B_k u_{\star}, \ \  y_\star = C_k \xi_{\star}, \ \  u_{\star} =\phi(y_\star), \ \ x_{\star}=E_k \xi_{\star}=y_{\star} \ \ \mbox{for all } k.
		\end{align}
		\item Suppose there exist symmetric matrices $M_k^{1}, M_k^{2}, M_k^{3}$ such that the following inequalities hold for all $k$.
		\begin{subequations} \label{thm: main result 2}
			\begin{align}
			F(x_{k+1}) - F(x_k) &\leq e_k^\top M_k^{1} e_k, \label{thm: main result 21}\\
			F(x_{k+1}) - F(x_{\star}) &\leq e_k^\top M_k^{2}e_k, \label{thm: main result 22}\\
			0 &\leq e_k^\top M_k^{3} e_k, \label{thm: main result 23}
			\end{align}
		\end{subequations}
		where $e_k = [(\xi_k-\xi_{\star})^\top \ (u_k-u_{\star})^\top]^\top \in \mathbb{R}^{n+d}$ and $M_k^{3}$ is either zero or indefinite.
		\item Suppose there exists a nonnegative and nondecreasing sequence of reals $\{a_k\}$, a sequence of nonnegative reals $\{\sigma_k\}$, and a sequence of $n \times n$ positive semidefinite matrices $\{P_k\}$ satisfying
		
		\begin{align} \label{thm: main 1}
		M_k^{0}  + a_{k} M_k^{1} + (a_{k+1}-a_k) M_k^{2} + \sigma_k M_k^{3}\preceq 0 \quad \mbox{for all } k,
		\end{align}
		where
		\begin{align} \label{thm: main 2}
		M_k^0 &= \begin{bmatrix}
		A_k^\top P_{k+1} A_k - P_k& A_k^\top P_{k+1} B_k\\  B_k^\top P_{k+1} A_k& B_k^\top P_{k+1} B_k \end{bmatrix}.
		\end{align}
		%
		Then the sequence $\{x_k\}$ satisfies
		\begin{align}
		F(x_{k})-F(x_{\star}) \leq \dfrac{a_0(F(x_0)-F(x_{\star})) + (\xi_0-\xi_\star)^\top P_0  (\xi_0-\xi_\star)}{a_k} \quad \mbox{for all} \ k.
		\end{align}
	\end{enumerate}
\end{theorem}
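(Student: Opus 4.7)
The guiding idea is the one anticipated in the paragraph preceding the statement: if the Lyapunov function $V_k$ in \eqref{eq: Lyapunov gradient discrete} is nonincreasing along the trajectories of \eqref{eq: discrete time dynamics}, then telescoping gives $a_k(F(x_k)-F(x_\star)) \leq V_k(x_k,\xi_k) \leq V_0(x_0,\xi_0)$, which is exactly the claimed bound. So the entire content of the theorem lies in verifying the drift inequality \eqref{eq: Lyapunov drift} from the hypotheses. I would build this verification out of two separate computations (one for the function-value part of $V_k$, one for the quadratic part) and glue them via the LMI \eqref{thm: main 1}.

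\textbf{Function-value part.} Writing $\Delta_k^F := a_{k+1}(F(x_{k+1})-F(x_\star)) - a_k(F(x_k)-F(x_\star))$, the telescoping identity
\[
\Delta_k^F \;=\; a_k\bigl(F(x_{k+1})-F(x_k)\bigr) + (a_{k+1}-a_k)\bigl(F(x_{k+1})-F(x_\star)\bigr)
\]
is the pivotal trick for handling time-varying weights. Because $a_k\geq 0$ and $a_{k+1}-a_k \geq 0$ by the nondecreasing assumption, multiplying \eqref{thm: main result 21} by $a_k$ and \eqref{thm: main result 22} by $(a_{k+1}-a_k)$ preserves directions, yielding $\Delta_k^F \leq e_k^\top\bigl(a_k M_k^1 + (a_{k+1}-a_k) M_k^2\bigr) e_k$.

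\textbf{Quadratic part and assembly.} From the fixed-point relations \eqref{thm: main result 1} one obtains the error dynamics $\xi_{k+1}-\xi_\star = A_k(\xi_k-\xi_\star) + B_k(u_k-u_\star)$. Stacking into $e_k=[(\xi_k-\xi_\star)^\top\ (u_k-u_\star)^\top]^\top$ and expanding gives the identity
\[
(\xi_{k+1}-\xi_\star)^\top P_{k+1}(\xi_{k+1}-\xi_\star) - (\xi_k-\xi_\star)^\top P_k(\xi_k-\xi_\star) \;=\; e_k^\top M_k^0 e_k,
\]
with $M_k^0$ as in \eqref{thm: main 2}. Adding this to the function-value bound produces
\[
V_{k+1}(x_{k+1},\xi_{k+1}) - V_k(x_k,\xi_k) \;\leq\; e_k^\top\bigl(M_k^0 + a_k M_k^1 + (a_{k+1}-a_k) M_k^2\bigr)e_k.
\]
Since $\sigma_k \geq 0$ and $e_k^\top M_k^3 e_k \geq 0$ by \eqref{thm: main result 23}, the quantity $\sigma_k e_k^\top M_k^3 e_k$ is nonnegative, so the right-hand side above is bounded by $e_k^\top\bigl(M_k^0 + a_k M_k^1 + (a_{k+1}-a_k) M_k^2 + \sigma_k M_k^3\bigr)e_k$, which is $\leq 0$ by the LMI \eqref{thm: main 1}. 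Hence $V_{k+1}\leq V_k$, and telescoping from $0$ to $k$ closes the argument.

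\textbf{Expected obstacle.} The sign bookkeeping is the only delicate point: because $M_k^3$ is allowed to be indefinite, one cannot treat $\sigma_k M_k^3$ as a sign-definite matrix and must argue through the scalar inequality $e_k^\top M_k^3 e_k \geq 0$, which is an IQC-style constraint on the feasible iterates rather than a pointwise property of $M_k^3$. Ensuring that this weak-inequality step is inserted in the direction that \emph{weakens} the drift bound (so the LMI can then close it) is the principal place where care is needed, and it is also the part that will generalize cleanly when several IQCs with their own multipliers are combined.
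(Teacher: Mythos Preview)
Your proposal is correct and mirrors the paper's proof essentially step for step: the same telescoping split of the function-value increment into $a_k(F(x_{k+1})-F(x_k))+(a_{k+1}-a_k)(F(x_{k+1})-F(x_\star))$, the same error-dynamics identity yielding $e_k^\top M_k^0 e_k$ for the quadratic part, and the same use of $\sigma_k e_k^\top M_k^3 e_k \geq 0$ together with the LMI \eqref{thm: main 1} to close the drift inequality. The only cosmetic difference is the order in which you introduce the $\sigma_k M_k^3$ term (you add it to weaken the bound before invoking the LMI, while the paper multiplies the LMI out first and then subtracts the nonnegative IQC term), which is logically the same step.
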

Before proving Theorem \ref{thm: main result}, we briefly discuss the assumptions made in the statement of the theorem. The first inequality in \eqref{thm: main result 2} bounds the difference between two consecutive iterates. In particular, if $M_k^1$ is negative semidefinite for all $k$, then the sequence $\{F(x_k)\}$ is monotone. The second inequality in \eqref{thm: main result 2} bounds the suboptimality; and finally, the third inequality in \eqref{thm: main result 2} is a quadratic constraint on the input-output pairs $(\xi_k,u_k)$ that are related via the rule $u_k = \phi(C_k \xi_{k})$. These bounds will be required to satisfy condition \eqref{eq: Lyapunov drift} and will feature heavily throughout the paper.
Note that the matrices $(M_k^{1}, M_k^{2},M_k^{3})$ in \eqref{thm: main result 2} depend on the algorithm parameters, i.e., the matrices $(A_k,B_k,C_k,E_k)$ that define the algorithm, as well as the assumptions about the objective function $F$. 

\begin{proof}[Proof of Theorem \ref{thm: main result}]
		First, by \eqref{eq: discrete time dynamics} and \eqref{thm: main result 1}, we can write
		\begin{align*}
		\xi_{k+1}-\xi_{\star}&=A_k(\xi_{k}-\xi_{\star})+B_k(u_k-u_{\star}),
		\end{align*}
		Using the above identity, we can write
		\begin{subequations} \label{thm: main 56}
			\begin{align} \label{thm: main 6}
			(\xi_{k+1}-\xi_{\star})^\top P_{k+1} (\xi_{k+1}-\xi_{\star}) \!-\! (\xi_{k}-\xi_{\star})^\top P_k (\xi_{k}-\xi_{\star}) = e_k^\top M_k^{0} e_k. 
			\end{align}
			Multiply \eqref{thm: main result 21} by $a_{k}$ and \eqref{thm: main result 22} by $(a_{k+1}-a_k)$ and add both sides of the resulting inequalities to obtain
			\begin{align} \label{thm: main 7}
			a_{k+1} (F(x_{k+1})-F(x_{\star})) - a_k (F(x_k)-F(x_{\star})) \leq 0.
			\end{align}
		\end{subequations}
		By adding both sides of the inequalities in \eqref{thm: main 56} and recalling the definition of $V_k(x_k,\xi_k)$ in \eqref{eq: Lyapunov gradient discrete}, we can write
		\begin{align}  \label{thm: main 8}
		V_{k+1}(x_{k+1},\xi_{k+1})\!-\!V_k(x_k,\xi_k) \leq & \, e_k^\top \left( M_k^{0}+a_{k}M_k^{1}+(a_{k+1}-a_k)M_k^{2} \right) e_k.
		\end{align}
		Suppose the matrix inequality in \eqref{thm: main 1} holds. By multiplying this inequality from the left and right by $e_k^\top$ and $e_k$, respectively, we obtain
		\begin{align}  \label{thm: main 9}
		e_k^\top \left( M_k^{0}+a_{k}M_k^{1}+(a_{k+1}-a_k)M_k^{2} + \sigma_k M_k^{3} \right) e_k \leq 0.
		\end{align}
		Finally, adding both sides of \eqref{thm: main 8} and \eqref{thm: main 9} yields
		\begin{align}
		V_{k+1}(x_{k+1},\xi_{k+1}) - V_k(x_k,\xi_k) \leq -\sigma_k  e_k^\top M_k^{3} e_k \leq 0,
		\end{align}
		where the second inequality follows from \eqref{thm: main result 23}. Hence, the sequence $\{V_k(x_k,\xi_k)\}$ is nonincreasing, implying $a_k(F(x_k)-F(x_{\star})) \leq V_k(x_k,\xi_k) \leq V_0(x_0,\xi_0)$. The proof becomes complete by dividing both sides of the last inequality by $a_{k}$.
\end{proof}

Some remarks are in order regarding Theorem \ref{thm: main result}:

\medskip

\begin{enumerate}[leftmargin=*]	
	\item We do not make the assumption that the algorithm under consideration is a descent method. In other words, the sequence $\{F(x_k)\}$ of function values is not necessarily monotone, which is a hallmark of accelerated algorithms \cite{nesterov2013introductory}. In contrast, we require the sequence $\{V_k(x_k,\xi_{k})\} $ of ``energy" values to be monotonically decreasing. From this perspective, the LMI \eqref{thm: main 1} provides a guideline for the construction energy functions with this property.
	\medskip
	\item There is no restriction on the sequence $\{a_k\}$ other than nonnegativity and monotonicity. Hence, we can characterize both exponential ($a_k=\rho^{-k}, 0 \leq \rho<1$) and subexponential ($a_k=k^p, \ p>0$, for example) convergence rates.
	\medskip
	\item We have made no explicit assumptions about the objective function in Theorem \ref{thm: main result}, other than the quadratic bounds in \eqref{thm: main result 2}. In fact, the matrices $M_k^1,M_k^2,M_k^3$ that characterize these bounds depend on the parameters of the algorithm (e.g. stepsize, momentum coefficient, etc.), as well as the assumptions about $F$. In $\S$\ref{section: unconstrained smooth programming} and $\S$\ref{sec: Proximal methods}, we will describe a general procedure for deriving these matrices for a wide range of algorithms and assumptions. 
\end{enumerate}

\medskip

\subsection{Time-invariant algorithms with exponential convergence}
In this subsection, we specialize the results of Theorem \ref{thm: main result} to time-invariant algorithms with exponential convergence. Under these assumptions, we can precondition $a_k$ and $P_k$ to simplify the LMI in \eqref{thm: main 1}. Explicitly, suppose the matrices $(A_k,B_k,C_k,E_k)$ that define the algorithm do not change with $k$. 
By the particular selection 
\begin{align} \label{eq: precondition for SC}
a_k = \rho^{-2k}a_0, \quad a_0>0, \quad P_k = \rho^{-2k} P_0, \quad P_0 \succeq 0, \quad 0 < \rho \leq 1 \quad \mbox{for all } k,
\end{align}
the Lyapunov function in \eqref{eq: Lyapunov gradient discrete} reads as
\begin{align} \label{eq: Lyapunov SC}
V_k(\xi) = \rho^{-2k} \left(a_0(F(x)\!-\!F(x_{\star})) + (\xi-\xi_\star)^\top P_0  (\xi-\xi_\star)\right).
\end{align}
The unknown parameters of the Lyapunov function are now $a_0 >0, P_0 \succeq 0$, and the decay rate $0 < \rho \leq 1$. With this parameter selection, the LMI in \eqref{thm: main 1} simplifies greatly. The following result is a special case of Theorem \ref{thm: main result} for the selection \eqref{eq: precondition for SC}.
\begin{theorem}[Exponential convergence of time-invariant algorithms] \label{thm: exponential rate} In theorem \ref{thm: main result}, assume that the algorithm parameters as well as the matrices $M_k^1,M_k^2,M_k^3$ in \eqref{thm: main result 2} do not change with $k$. In other words,
\begin{align*}
(A_k,B_k,C_k,E_k,M_k^1,M_k^2,M_k^3)=(A_0,B_0,C_0,E_0,M_0^1,M_0^2,M_0^3) \quad \mbox{for all } k.
\end{align*} 
Suppose there exists $a_0>0$, $P_0 \in \mathbb{S}_{+}^n$, and $\lambda_0 \geq 0$ that satisfy
	\begin{align} \label{eq: LMI exponential}
	\begin{bmatrix}
	A_0^\top P_0 A_0 - \rho^2 P_0& A_0^\top P_0 B_0\\  B_0^\top P_0 A_0& B_0^\top P_0 B_0
	\end{bmatrix}+a_0 \rho^2 M_0^1 + a_0(1\!-\!\rho^2) M_0^2 + \lambda_0 M_0^3 \preceq 0,
	\end{align}
	for some $0 < \rho \leq 1$. Then the sequence $\{x_k\}$ satisfy
	\begin{align} \label{eq: exponential convergence 1}
	F(x_k) - F(x_{\star}) \leq \frac{a_0(F(x_0)\!-\!F(x_{\star})) + (\xi_0-\xi_\star)^\top P_0  (\xi_0-\xi_\star)}{a_0}\rho^{2k}.
	\end{align}
\end{theorem}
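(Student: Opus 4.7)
The plan is to derive this theorem as a direct corollary of Theorem \ref{thm: main result} by choosing the parameter sequences $\{a_k\}$, $\{P_k\}$, and $\{\sigma_k\}$ according to the geometric schedule suggested in \eqref{eq: precondition for SC}, and then verifying that the time-varying LMI \eqref{thm: main 1} collapses to the single LMI \eqref{eq: LMI exponential}.

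First, I would check the admissibility of the chosen sequences. With $a_k = \rho^{-2k} a_0$ for $a_0>0$ and $0<\rho\le 1$, the sequence $\{a_k\}$ is nonnegative and nondecreasing (since $\rho^{-2}\ge 1$), as required by Theorem \ref{thm: main result}. Setting $P_k = \rho^{-2k} P_0 \succeq 0$ keeps the matrix sequence in $\mathbb{S}_+^n$, and defining $\sigma_k = \rho^{-2(k+1)} \lambda_0$ with $\lambda_0 \ge 0$ gives a valid nonnegative multiplier sequence.

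Next, I would compute $M_k^0$ explicitly under the time-invariance hypothesis. Substituting $P_{k+1} = \rho^{-2(k+1)} P_0$ and $P_k = \rho^{-2k} P_0$ into \eqref{thm: main 2} and factoring out $\rho^{-2(k+1)}$ yields
\begin{align*}
M_k^0 \;=\; \rho^{-2(k+1)} \begin{bmatrix} A_0^\top P_0 A_0 - \rho^2 P_0 & A_0^\top P_0 B_0 \\ B_0^\top P_0 A_0 & B_0^\top P_0 B_0 \end{bmatrix}.
\end{align*}
Combined with $a_k M_0^1 = \rho^{-2k} a_0 M_0^1 = \rho^{-2(k+1)}\rho^2 a_0 M_0^1$ and $(a_{k+1}-a_k) M_0^2 = \rho^{-2(k+1)} a_0 (1-\rho^2) M_0^2$, every term in \eqref{thm: main 1} carries the common factor $\rho^{-2(k+1)}$. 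Multiplying \eqref{thm: main 1} through by $\rho^{2(k+1)}>0$ (which preserves the semidefinite inequality) produces precisely the LMI \eqref{eq: LMI exponential}. Thus, the feasibility of \eqref{eq: LMI exponential} is equivalent to the feasibility of \eqref{thm: main 1} for all $k$ under the chosen schedules, allowing me to invoke Theorem \ref{thm: main result}.

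Finally, the rate bound \eqref{eq: exponential convergence 1} follows by substituting $a_k = \rho^{-2k} a_0$ into the conclusion of Theorem \ref{thm: main result}: the denominator $a_k$ becomes $\rho^{-2k}a_0$, and moving $\rho^{-2k}$ to the numerator yields the advertised $\rho^{2k}$ geometric decay. There is no real obstacle here since the argument is a specialization; the only subtlety is bookkeeping the factor $\rho^{-2(k+1)}$ carefully so that the multiplier $\sigma_k$ absorbs the correct power of $\rho$ and the resulting LMI is truly time-independent, which is why the theorem is a meaningful corollary rather than a trivial rewrite.
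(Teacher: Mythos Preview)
Your proposal is correct and follows essentially the same approach as the paper: specialize Theorem \ref{thm: main result} via the geometric schedules $a_k=\rho^{-2k}a_0$, $P_k=\rho^{-2k}P_0$, $\sigma_k=\rho^{-2(k+1)}\lambda_0$, factor out $\rho^{-2(k+1)}$ from \eqref{thm: main 1}, and read off the rate bound. Your write-up simply makes explicit the bookkeeping (especially the choice of $\sigma_k$) that the paper leaves implicit.
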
	
\begin{proof}
	By substituting the parameter selection \eqref{eq: precondition for SC} in \eqref{thm: main 1} and factoring out the positive term $\rho^{-2k-2}$ from the resulting LMI, we obtain \eqref{eq: LMI exponential}, which no longer depends on $k$. Utilizing Theorem \ref{thm: main result}, the feasibility of \eqref{eq: LMI exponential} ensures \eqref{eq: Lyapunov drift}, which in turn implies \eqref{eq: exponential convergence 1}. The proof is complete.
\end{proof}
%


\begin{remark}\normalfont
	Regarding the parameter selection in \eqref{eq: precondition for SC}, if we instead select $a_k \equiv 0$, $P_{k}=\rho^{-2k} P_0$ with $P_0 \succ 0$ and $0 < \rho \leq 1$, the Lyapunov function \eqref{eq: Lyapunov gradient discrete} simplifies to the quadratic function
	\begin{align} \label{eq: quadratic Lyapunov}
	V_{k}(\xi)=\rho^{-2k}(\xi-\xi_{\star})^\top P_0 (\xi-\xi_{\star}), \quad P_0 \succ 0.
	\end{align}
	Correspondingly, the LMI \eqref{eq: LMI exponential} in Theorem \ref{thm: exponential rate} reduces to
	\begin{align} \label{eq: Lessard formulation}
	\begin{bmatrix}
	A_0^\top P_0 A_0 - \rho^2 P_0& A_0^\top P_0 B_0\\  B_0^\top P_0 A_0& B_0^\top P_0 B_0
	\end{bmatrix}+\lambda_0  M_0^3 \preceq 0.
	\end{align}
	By Theorem \ref{thm: main result}, if \eqref{eq: Lessard formulation} is feasible for some $P_0 \succ 0, \lambda_0 \geq 0$ and $0 < \rho \leq 1$, then the Lyapunov function in \eqref{eq: quadratic Lyapunov} would satisfy the decreasing property $V_{k+1}(\xi_{k+1}) \leq V_{k}(\xi_{k})$, which translates to
	\begin{align*}
	(\xi_{k+1}-\xi_{\star})^\top P_0 (\xi_{k+1}-\xi_{\star}) \leq \rho^2 (\xi_{k}-\xi_{\star})^\top P_0 (\xi_{k}-\xi_{\star}),
	\end{align*}
	or equivalently,
	\begin{align} \label{eq: exponential convergence 0}
	\|\xi_{k}-\xi_{\star}\|_2^2 \leq \rho^{2k}\text{cond}(P_0)\|\xi_{0}-\xi_{\star}\|_2^2.
	\end{align}
	The matrix inequality \eqref{eq: Lessard formulation} is precisely the condition derived in \cite[Theorem 4]{lessard2016analysis} for the case of strongly convex objective functions, time-invariant first-order algorithms, and \emph{pointwise} IQCs. 
\end{remark}
\medskip

Having established the main result, it now remains to determine the matrices $M_k^{i}, \ i \in \{0,1,2,3\}$ that construct the LMI in \eqref{thm: main 1}. To this end, we first need to introduce IQCs in the context of optimization algorithms.

\subsection{IQCs for optimization algorithms} \label{sec: Integral Quadratic Constraints for optimization algorithms}
In control theory, there are various approaches and criteria for stability of linear dynamical systems in feedback interconnection with a memoryless and possibly time-varying nonlinearity. In this context, IQCs, originally proposed by Megretski and Rantzer \cite{megretski1997system}, is a powerful tool for describing various classes of nonlinearities, and are particularly useful for LMI-based stability analysis. Lessard et al. \cite{lessard2016analysis} have recently adapted the theory of IQCs for use in optimization algorithms. Specifically, they translate the first-order defining properties of convex functions into various forms of IQCs for their gradient mappings. In the following, we briefly describe the notion of pointwise IQCs \cite{lessard2016analysis} (or quadratic constraints), that will be essential for subsequent developments.

\subsubsection{Pointwise IQCs} Consider a mapping $\phi \colon \! \mathbb{R}^d \to \mathbb{R}^d$ and a chosen ``reference" input-output pair\footnote{As we will see later, the reference point is chosen as the fixed point of the interconnected system we wish to analyze.} $(x_{\star},\phi(x_{\star}))$, $x_{\star} \in \dom \, \phi$. We say that $\phi$ satisfies the \emph{pointwise} IQC defined by $(Q_{\phi},x_{\star},\phi(x_{\star}))$ on $\mathcal{S}\subseteq \dom \, \phi$ if for all $x \in \mathcal{S}$, the following inequality holds \cite{lessard2016analysis}.
%
\begin{align} \label{eq: IQC}
\begin{bmatrix}
x-x_{\star} \\ \phi(x)-\phi(x_{\star})
\end{bmatrix}^\top  Q_{\phi} \, \begin{bmatrix}
x-x_{\star} \\ \phi(x)-\phi(x_{\star})
\end{bmatrix}  \geq 0,
\end{align}
where $Q_{\phi} \in \mathbb{S}^{2d}$ is a symmetric, indefinite matrix.\footnote{If $Q_{\phi}$ is positive (semi)definite, the quadratic constraint holds trivially and is not informative about $\phi$.}  Many inequalities in optimization can be represented as IQCs of the form \eqref{eq: IQC}. For instance, suppose $\phi(x)$ is $L_{\phi}$-Lipschitz continuous on $\mathcal{S}\subseteq \dom \, \phi$ for some positive and finite $L_{\phi}$,  i.e., $\|\phi(x)-\phi(x_{\star})\|_2 \leq L_{\phi} \|x-x_{\star}\|_2$ for all $(x,x_{\star} )\in \mathcal{S}\times \mathcal{S}$. By squaring both sides and rearranging terms, we obtain
\begin{align} \label{eq: Lipschitz map}
\begin{bmatrix}
x-x_{\star} \\ \phi(x)-\phi(x_{\star})
\end{bmatrix}^\top \begin{bmatrix}
L_{\phi}^2I_d  & 0\\ 0& -I_d
\end{bmatrix} \, \begin{bmatrix}
x-x_{\star} \\ \phi(x)-\phi(x_{\star})
\end{bmatrix} \geq 0,
\end{align}
which equivalently describes Lipschitz continuity. As another example, assume $\phi$ is a \emph{firmly nonexpansive mapping} on $\mathcal{S}$. That is, for all $(x,x_{\star} )\in \mathcal{S}\times \mathcal{S}$, we have that $\|\phi(x)-\phi(x_{\star})\|_2^2 \leq (x-x_{\star})^\top (\phi(x)-\phi(x_{\star}))$. This inequality can be rewritten as
\begin{align} \label{eq: firm nonexpansive map}
\begin{bmatrix}
x-x_{\star} \\ \phi(x)-\phi(x_{\star})
\end{bmatrix}^\top \begin{bmatrix}
0  & \frac{1}{2}I_d\\ \frac{1}{2}I_d & -I_d
\end{bmatrix} \, \begin{bmatrix}
x-x_{\star} \\ \phi(x)-\phi(x_{\star})
\end{bmatrix} \geq 0.
\end{align}
Note that by the Cauchy-Schwartz inequality, firm non-expansiveness implies Lipschitz continuity with Lipschitz parameter equal to one, i.e., \eqref{eq: firm nonexpansive map} implies \eqref{eq: Lipschitz map} with $L_{\phi}=1$.
There are many other interesting properties such as monotonicity (also known as incremental passivity), one-sided Lipschitz continuity, cocoercivity, etc., that could be represented by quadratic constraints. 
In the next subsection, we will focus on the gradient mapping of a convex function from an IQC perspective. 
\subsubsection{IQCs for (strongly) convex functions} \label{sec: IQCs for (strongly) convex functions}
%
%

Consider the gradient mapping $\phi=\nabla f$, where $f \in \mathcal{F}(m_f,L_f)$. It directly follows from the definition of (strong) convexity in \eqref{eq: strong convexity} that, $\nabla f$ satisfies the quadratic constraint
\begin{align} \label{eq: gradient strongly convex}
\begin{bmatrix}
x\!-\!y \\ \nabla f(x)\!-\!\nabla f(y)
\end{bmatrix}^\top \begin{bmatrix}
-m_f I_d & \frac{1}{2} I_d \\ \frac{1}{2} I_d & 0
\end{bmatrix}\begin{bmatrix}
x\!-\!y \\ \nabla f(x)\!-\!\nabla f(y)
\end{bmatrix} \geq 0.
\end{align}
Similarly, the Lipschitz inequality in \eqref{eq: Lipschitz continuity} can be represented as
\begin{align} \label{eq: gradient Lipschitz}
\begin{bmatrix}
x\!-\!y \\ \nabla f(x)\!-\!\nabla f(y)
\end{bmatrix}^\top \begin{bmatrix}
L_f^2 I_d & 0 \\ 0 & -1
\end{bmatrix}\begin{bmatrix}
x\!-\!y \\ \nabla f(x)\!-\!\nabla f(y)
\end{bmatrix} \geq 0.
\end{align}
To combine strong convexity and Lipschitz continuity in a single inequality, we note that $\nabla f$ also satisfies \cite{nesterov2013introductory}
\begin{align} \label{eq: strongly convex lipschitz 2}
\frac{m_f L_f}{m_f\!+\!L_f}\|y\!-\!x\|_2^2 \!+\! \frac{1}{m_f\!+\!L_f}\|\nabla f(y)\!-\!\nabla f(x)\|_2^2 \leq (\nabla f(y)\!-\!\nabla f(x))^\top(y\!-\!x).
\end{align}
The above inequality can be represented by the following quadratic constraint \cite{lessard2016analysis}, 
\begin{align}\label{eq: strongly convex IQC}
\begin{bmatrix}
x\!-\!y \\ \nabla f(x)\!-\!\nabla f(y)
\end{bmatrix}^\top \! Q_f \! \begin{bmatrix}
x\!-\!y \\ \nabla f(x)\!-\!\nabla f(y)
\end{bmatrix} \geq 0,
\ \  Q_f \!= \! \begin{bmatrix}
\frac{-m_fL_f}{m_f+L_f}I_d & \frac{1}{2}I_d\\ \frac{1}{2} I_d& \frac{-1}{m_f+L_f}I_d
\end{bmatrix}.
\end{align}
In the language of IQCs, we can say that the map $\phi = \nabla f$ satisfies the pointwise IQC defined by $(Q_f,x_{\star},\nabla f(x_{\star}))$, where the reference point $x_{\star}=y \in \mathcal{S}$ is arbitrary. Note that \eqref{eq: strongly convex IQC} encapsulates both (strong) convexity and Lipschitz continuity in a single IQC. It turns out that this quadratic constraint is both necessary and sufficient for the inclusion $f \in \mathcal{F}(m_f,L_f)$.

\medskip

\textit{Non-differentiable convex functions.} The above analysis can be extended to non-differentiable convex functions. Formally, the subdifferential $\partial f$ of a convex function $f \colon \mathbb{R}^d \to \mathbb{R} \cup \{+\infty\}$ is defined as
\begin{align} \label{eq: subdifferential}
\partial f(x)=\{\gamma \colon \gamma^\top (y-x) + f(x) \leq f(y),\ \forall y \in \dom \, f\},
\end{align}
where $\gamma$ is any subgradient of $f$, which we denote by $T_f(x)$. Adding the inequality in \eqref{eq: subdifferential} to the same inequality but with $x$ and $y$ interchanged, we obtain
\begin{align*}
(T_f(x)-T_f(y))^\top (x-y) \geq 0,
\end{align*}
which is equivalent to monotonicity of the subdifferential operator. Therefore, any subgradient of $f$ satisfies \eqref{eq: strongly convex IQC} with $L_f=\infty$. Note that this property holds even when $f$ is not convex.
%


\section{Performance results for unconstrained smooth programming} \label{section: unconstrained smooth programming}
In this section, we consider first-order algorithms designed to solve problems of the form
\begin{align} \label{eq: unconstrained optimization}
x_{\star} \in \mathrm{argmin}_{x \in \mathbb{R}^d} \ f(x) \quad \mbox{where}  \quad f \in \mathcal{F}(m_f,L_f).
\end{align}
The well-known optimality condition in this case is
$$\mathcal{X}_{\star} = \{ x_{\star} \in \dom \, f \colon \nabla f(x_{\star})=0\}.$$
We now consider an iterative first-order algorithm in the canonical form \eqref{eq: discrete time dynamics} for solving \eqref{eq: unconstrained optimization}, where the feedback nonlinearity is given by $\phi=\nabla f$. Since the sequences $\{x_k\}$ and $\{y_k\}$ converge to the same fixed point in the optimal set by assumption, we must have that $\nabla f(y_{\star})=\nabla f(x_{\star})=0$. In other words, 
%
%
%
%
the fixed points of \eqref{eq: discrete time dynamics} satisfy
\begin{align} \label{eq: discrete time dynamics fixed points}
\xi_{\star} = A_k \xi_{\star}, \quad  y_\star = C_k \xi_{\star}, \quad  u_{\star} =\nabla f(y_\star) = 0, \quad x_{\star}&=E_k \xi_{\star}=y_{\star}, \quad \mbox{for all } k.
\end{align}
In the following result, we characterize the quadratic bounds in \eqref{thm: main result 2} for the class $\mathcal{F}(m_f,L_f)$.
\begin{lemma}\label{prop: Quadratic bounds for convex functions}
	Let $x_{\star} \in \mathrm{argmin}_{x \in \mathbb{R}^d} \ f(x)$ be a minimizer of $f \in \mathcal{F}(m_f,L_f)$ with a finite optimal value $f(x_{\star})$. Consider an iterative first-order algorithm in the state-space form \eqref{eq: discrete time dynamics} with $\phi=\nabla f$, where the fixed points $(\xi_{\star},u_{\star},y_{\star},x_{\star})$ satisfy 
		\begin{align} 
		\xi_{\star} = A_k \xi_{\star}, \ \ y_\star = C_k \xi_{\star}, \ \ u_{\star} =\nabla f(y_{\star})=0, \ \ x_{\star}=E_k \xi_{\star}=y_{\star} \quad \mbox{for all } k.
		\end{align}
		Define $e_k = [(\xi_k-\xi_{\star})^\top \ (u_k-u_{\star})^\top]^\top$. Then the following inequalities hold for all $k$.
	\begin{subequations} \label{eq: quadratic bounds}
		\begin{align} 
			f(x_{k+1}) - f(x_k) &\leq e_k^\top M_k^{1} e_k, \label{thm: main 3}\\
			f(x_{k+1}) - f(x_{\star}) &\leq e_k^\top M_k^{2}e_k, \label{thm: main 4}\\
			0 &\leq e_k^\top M_k^{3} e_k, \label{thm: main 5}
		\end{align}
	\end{subequations}
	where $M_k^1,M_k^2,M_k^3$ are given by
	\begin{align} \label{lemma: LMI matrices convex functions 1}
		M_k^{1} = N_k^{1} + N_k^{2}, \quad M_k^{2}= N_k^{1} + N_k^{3}, \quad M_k^{3} = N_k^{4}.
	\end{align}
	with 
	\begin{align*} 
		N_k^{1} \!&= \begin{bmatrix}
			E_{k+1} A_k\!-\! C_k & E_{k+1}B_k \\ 0 & I_d 
		\end{bmatrix}^\top
		\begin{bmatrix}
			\frac{L_f}{2} I_d & \frac{1}{2} I_d \\ \frac{1}{2} I_d & 0
		\end{bmatrix}
		\begin{bmatrix}
			E_{k+1} A_k\!-\! C_k & E_{k+1}B_k \\ 0 & I_d 
		\end{bmatrix} , \\ \nonumber
		N_k^{2} &=\begin{bmatrix}
			C_k-E_k & 0 \\ 0 & I_d
		\end{bmatrix}^\top \begin{bmatrix}
			-\frac{m_f}{2} I_d & \frac{1}{2} I_d \\ \frac{1}{2} I_d & 0
		\end{bmatrix} \begin{bmatrix}
			C_k-E_k & 0 \\ 0 & I_d
		\end{bmatrix},  \\ \nonumber 
		N_k^{3} &= \begin{bmatrix}
			C_k & 0 \\ 0 & I_d
		\end{bmatrix}^\top \begin{bmatrix}
			-\frac{m_f}{2} I_d & \frac{1}{2} I_d \\ \frac{1}{2} I_d & 0
		\end{bmatrix} \begin{bmatrix}
			C_k & 0 \\ 0 & I_d
		\end{bmatrix}, \\
		N_k^{4} &=\begin{bmatrix}
			C_k & 0 \\ 0 & I_d
		\end{bmatrix}^\top \begin{bmatrix}
			\frac{-m_fL_f}{m_f+L_f} I_d & \frac{1}{2} I_d \\ \frac{1}{2} I_d  & \frac{-1}{m_f+L_f} I_d 
		\end{bmatrix} \begin{bmatrix}
			C_k & 0 \\ 0 & I_d
		\end{bmatrix}.
	\end{align*} 
    \end{lemma}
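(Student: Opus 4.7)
The plan is to identify each of the four quadratic forms $e_k^\top N_k^i e_k$ with a concrete one-step inequality satisfied by $f\in\mathcal{F}(m_f,L_f)$ at a specific pair of points, and then combine these building blocks. The engine is the observation that the fixed-point conditions \eqref{eq: discrete time dynamics fixed points} make several differences $x_{k+1}-y_k$, $y_k-x_k$, $y_k-x_\star$, and $u_k-u_\star$ expressible as linear functions of $e_k=[(\xi_k-\xi_\star)^\top\,(u_k-u_\star)^\top]^\top$.

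First I would unfold the state equations to write
\begin{align*}
x_{k+1}-y_k &= E_{k+1}(A_k\xi_k+B_k u_k)-C_k\xi_k
= (E_{k+1}A_k-C_k)(\xi_k-\xi_\star)+E_{k+1}B_k(u_k-u_\star), \\
y_k-x_k &= (C_k-E_k)(\xi_k-\xi_\star), \\
y_k-x_\star &= C_k(\xi_k-\xi_\star), \\
u_k-u_\star &= \nabla f(y_k),
\end{align*}
using $A_k\xi_\star=\xi_\star$, $u_\star=0$, and $E_k\xi_\star=C_k\xi_\star=E_{k+1}\xi_\star=x_\star=y_\star$. After this bookkeeping, each $N_k^i$ is manifestly of the form $T_i^\top Q_i T_i$ where $T_i e_k$ equals the stacked vector $[v_i\,;\,u_k-u_\star]$ for one of the four displacements $v_i$ above, and $Q_i$ is the corresponding $2d\times 2d$ matrix given in the statement.

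Next I would apply four standard one-step inequalities. Smoothness \eqref{eq: Lipschitz continuity function values} applied at the pair $(y_k,x_{k+1})$ gives $f(x_{k+1})-f(y_k)\le \nabla f(y_k)^\top(x_{k+1}-y_k)+\tfrac{L_f}{2}\|x_{k+1}-y_k\|_2^2$, and a direct expansion shows the right-hand side equals $e_k^\top N_k^1 e_k$. Strong convexity \eqref{eq: strong convexity function values} applied at $(y_k,x_k)$ gives $f(y_k)-f(x_k)\le \nabla f(y_k)^\top(y_k-x_k)-\tfrac{m_f}{2}\|y_k-x_k\|_2^2 = e_k^\top N_k^2 e_k$, and the same inequality at $(y_k,x_\star)$ yields $f(y_k)-f(x_\star)\le \nabla f(y_k)^\top(y_k-x_\star)-\tfrac{m_f}{2}\|y_k-x_\star\|_2^2 = e_k^\top N_k^3 e_k$. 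Finally, the strongly convex IQC \eqref{eq: strongly convex IQC}, which is the combined $\mathcal{F}(m_f,L_f)$ characterization, evaluated at $(y_k,x_\star)$ with $\nabla f(x_\star)=0$ directly reads $e_k^\top N_k^4 e_k\ge 0$.

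Adding the first two inequalities telescopes $f(y_k)$ and gives \eqref{thm: main 3} with $M_k^1=N_k^1+N_k^2$; adding the first and third gives \eqref{thm: main 4} with $M_k^2=N_k^1+N_k^3$; and the fourth is exactly \eqref{thm: main 5} with $M_k^3=N_k^4$. The only nontrivial part of the argument is the linear-algebra verification that the specific $T_i$ factors in the statement indeed realize the displacements $v_i$ after using the fixed-point relations, which is a matter of checking that cross terms vanish; once that is done, the functional inequalities are immediate from the definitions of $m_f$-strong convexity and $L_f$-smoothness. No single step is conceptually hard, but the bookkeeping on $E_{k+1}$ versus $E_k$ in the smoothness term, together with the fact that the $(2,2)$ block of $N_k^1$ is zero (so $E_{k+1}B_k$ only appears through the cross term $\nabla f(y_k)^\top(x_{k+1}-y_k)$), is the place where a sign or indexing slip would be easiest to make.
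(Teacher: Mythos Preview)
Your proposal is correct and follows exactly the paper's approach: derive $N_k^1$ from the smoothness descent inequality at $(y_k,x_{k+1})$, $N_k^2$ and $N_k^3$ from strong convexity at $(y_k,x_k)$ and $(y_k,x_\star)$ respectively, $N_k^4$ from the sector IQC \eqref{eq: strongly convex IQC}, and then telescope. One small slip in your commentary: it is the $(2,2)$ block of the \emph{middle} matrix in $N_k^1$ that is zero, not the $(2,2)$ block of $N_k^1$ itself, and $E_{k+1}B_k$ does enter the quadratic term $\tfrac{L_f}{2}\|x_{k+1}-y_k\|_2^2$ as well; this does not affect the argument.
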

\begin{proof}
	First, by Lipschitz continuity of $\nabla f$, we can write
	\begin{align} \label{lemma: LMI matrices convex functions 3}
	f(x_{k+1}) - f(y_k) & \leq \begin{bmatrix}
	x_{k+1} -y_k \\ \nabla f(y_k) 
	\end{bmatrix}^\top \begin{bmatrix}
	\frac{L_f}{2} I_d & \frac{1}{2} I_d \\ \frac{1}{2} I_d & 0
	\end{bmatrix}\begin{bmatrix}
	x_{k+1} -y_k \\ \nabla f(y_k) 
	\end{bmatrix}.
	\end{align}
	From the recursion in \eqref{eq: discrete time dynamics}, we have that
	\begin{align}  \label{lemma: LMI matrices convex functions 4}
	\begin{bmatrix}
	x_{k+1} -y_k \\ \nabla f(y_k) 
	\end{bmatrix}= \begin{bmatrix}
	E_{k+1} A_k- C_k & E_{k+1}B_k \\ 0 & I_d 
	\end{bmatrix} 	\begin{bmatrix}
	\xi_{k} -\xi_{\star} \\ u_k - u_{\star}
	\end{bmatrix}.
	\end{align}
	Substituting \eqref{lemma: LMI matrices convex functions 4} in \eqref{lemma: LMI matrices convex functions 3} yields
	\begin{align} \label{lemma: LMI matrices convex functions 5}
	f(x_{k+1}) - f(y_k) & \leq e_k^\top 
	N_k^{1}
	e_k.
	\end{align}
	Next, we use (strong) convexity and the identity $y_k-x_k=(C_k-E_k)(\xi_{k}-\xi_{\star})$ to write
	\begin{align} \label{lemma: LMI matrices convex functions 6}
	f(y_k) - f(x_k) & \leq \begin{bmatrix}
	y_{k} -x_k \\ \nabla f(y_k) 
	\end{bmatrix}^\top \begin{bmatrix}
	-\frac{m_f}{2} I_d & \frac{1}{2} I_d \\ \frac{1}{2} I_d & 0
	\end{bmatrix}\begin{bmatrix}
	y_{k} -x_k \\ \nabla f(y_k) 
	\end{bmatrix}\\
	&\leq e_k^\top \begin{bmatrix}
	C_k-E_k& 0 \\ 0 & I_d
	\end{bmatrix}^\top  \begin{bmatrix}
	-\frac{m_f}{2} I_d & \frac{1}{2} I_d \\ \frac{1}{2} I_d & 0
	\end{bmatrix} \begin{bmatrix}
	C_k-E_k & 0 \\ 0 & I_d
	\end{bmatrix} e_k  \nonumber \\
	& = e_k^\top N_k^{2} e_k. \nonumber 
	\end{align}
	Adding both sides of \eqref{lemma: LMI matrices convex functions 5} and \eqref{lemma: LMI matrices convex functions 6} yields
	\begin{align*}
	f(x_{k+1}) - f(x_k) \leq e_k^\top (N_k^{1}+N_k^{2}) e_k = e_k^\top M_k^{1} e_k.
	\end{align*}
	By (strong) convexity and the identity $y_k-y_{\star}=C_k(\xi_{k}-\xi_{\star})$, we can write
	\begin{align} \label{lemma: LMI matrices convex functions 8}
	f(y_k) - f(y_{\star}) &\leq \begin{bmatrix}
	y_k-y_{\star} \\ \nabla f(y_k)
	\end{bmatrix}^\top \begin{bmatrix}
	-\frac{m_f}{2} I_d & \frac{1}{2} I_d \\ \frac{1}{2} I_d & 0
	\end{bmatrix} \, \begin{bmatrix}
	y_k-y_{\star} \\ \nabla f(y_k)
	\end{bmatrix} \\ &=e_k^\top \begin{bmatrix}
	C_k & 0 \\ 0 & I_d
	\end{bmatrix}^\top \begin{bmatrix}
	-\frac{m_f}{2} I_d & \frac{1}{2} I_d \\ \frac{1}{2} I_d & 0
	\end{bmatrix} \begin{bmatrix}
	C_k & 0 \\ 0 & I_d
	\end{bmatrix} e_k \nonumber \\
	&=e_k^\top {N_k^{3}} e_k. \nonumber 
	\end{align}
	By adding both sides of \eqref{lemma: LMI matrices convex functions 5} and \eqref{lemma: LMI matrices convex functions 8} we obtain
	\begin{align*}
	f(x_{k+1}) - f(x_{\star}) \leq e_k^\top (N_k^{1}+N_k^{3}) e_k = e_k^\top M_k^2 e_k.
	\end{align*}
	Finally, since $f \in \mathcal{F}(m_f,L_f)$, the gradient function $\nabla f$ satisfies the IQC in \eqref{eq: strongly convex IQC}. Since $y_k-y_{\star}=C_k(\xi_{k}-\xi_{\star})$, we can write
	\begin{align} \label{thm: Discrete-Time Model 12}
	e_k^\top N_k^{4} e_k = e_k^\top \begin{bmatrix}
	C_k & 0 \\ 0 & I_d
	\end{bmatrix}^\top Q_f \begin{bmatrix}
	C_k & 0 \\ 0 & I_d
	\end{bmatrix} \, e_k= \begin{bmatrix}
	y_k-y_{\star}  \\  u_k-u_{\star}
	\end{bmatrix}^\top Q_f \begin{bmatrix}
	y_k-y_{\star} \\ u_k - u_{\star}
	\end{bmatrix} \geq 0.
	\end{align}
	The proof is now complete.
\end{proof}

In Lemma \ref{prop: Quadratic bounds for convex functions}, we have used Lipschitz continuity and strong convexity assumptions to find the matrices in \eqref{eq: quadratic bounds}. Explicitly, $N_k^1$ follows from Lipschitz continuity, while $N_k^2$ and $N_k^3$ are due to strong convexity. Finally, the matrix $M_k^3 = N_k^4$ describes the quadratic constraints between the input-output pairs $(\xi_{k},u_k)$ that are related via $u_k=\nabla f(C_k \xi_k)$. Note that $M_k^3=N_k^4$ is an indefinite matrix as required.

\medskip

\begin{remark}[Exploiting block diagonal structure] \label{remark: structure}
	\normalfont We can often exploit some special structure in the data matrices $(A_k,B_k,C_k,E_k)$ {to reduce the dimension of the LMI \eqref{thm: main 1}. For many algorithms, the matrices $(A_k,B_k,C_k,E_k)$ are in the form $(A_k=\bar{A}_k \otimes I_d, B_k=\bar{B}_k \otimes I_d, C_k = \bar{C}_k \otimes I_d, E_k=\bar{E}_k \otimes I_d)$ where $(\bar{A}_k,\bar{B}_k,\bar{C}_k,\bar{E}_k)$ are lower dimensional matrices independent of $d$ \cite[$\S$4.2]{lessard2016analysis}}. By selecting $P_k=\bar{P}_k \otimes I_d$, where $\bar{P}_k$ is a lower dimensional matrix, we can factor out all the Kronecker products $\otimes I_d$ from the matrices $M_k^0,M_k^1,M_k^2,M_k^3$ and make the dimension of the corresponding LMI \eqref{thm: main 1} independent of $d$. In particular, a multi-step method with $r \geq 1$ steps yields an $(r+1) \times (r+1)$ LMI. For instance, the gradient method ($r=1$) and the Nesterov's accelerated method ($r=2$) yield $2 \times 2$ and $3 \times 3$ LMIs, respectively. We will use this dimensionality reduction in the forthcoming case studies.
\end{remark}


%

We can now use Lemma \ref{prop: Quadratic bounds for convex functions} in tandem with Theorem \ref{thm: main result} to derive convergence rates for some existing algorithms in the literature.

\subsection{Symbolic rate bounds}

In order to certify a convergence rate for a given algorithm, we must first represent the algorithm in the canonical form \eqref{eq: discrete time dynamics} and obtain the matrices $M_k^1, M_k^2, M_k^3$ that characterize the bounds in \eqref{thm: main result 2}. These matrices are provided in Lemma \ref{prop: Quadratic bounds for convex functions} for the case $f \in \mathcal{F}(m_f,L_f)$. Then, we must formulate the LMI \eqref{thm: main 1} and search for a feasible triple $(a_k,P_k,\sigma_k)$. In view of \eqref{eq: convergence rate discrete}, we seek to find the fastest convergence rate, i.e., the fastest growing $\{a_k\}$. In what follows, we illustrate this approach via analyzing the gradient method and the Nesterov's accelerated algorithm.

\subsubsection{The gradient method} 
Consider the gradient method applied to $ f \in \mathcal{F}(m_f,L_f)$ with constant stepsize:
\begin{align} \label{eq: gradient_method_discrete}
x_{k+1} = x_{k} - h\nabla f(x_{k}).
\end{align}
This recursion corresponds to the the state-space form \eqref{eq: discrete time dynamics} with $(A_k,B_k,C_k,E_k)=(I_d,-h I_d,I_d,I_d)$. By choosing $P_k=p_k I_d$ ($p_k \geq 0$), we can apply the dimensionality reduction outlined in Remark \ref{remark: structure} and reduce the dimension of the LMI. After dimensionality reduction, the matrices $M_k^{i}, \ i \in \{0,1,2,3\}$ in the LMI \eqref{thm: main 1} read as
%
	\begin{align} \label{eq: data matrices gradient method}
	M_k^{0}&= \begin{bmatrix}
	p_{k+1}\!-\! p_k& -h p_{k+1}\\  -h p_{k+1}& h^2 p_{k+1}
	\end{bmatrix},     \\ \nonumber
	M_k^{1}&\!=\begin{bmatrix}
	0 & 0 \\ 0 &  \frac{1}{2}(L_f h^2-2h)
	\end{bmatrix}, \\ \nonumber 
	M_k^{2}&= \!\begin{bmatrix}
	-\frac{m_f}{2} \!&\! \frac{1}{2} \\  \frac{1}{2} \!&\!  \frac{1}{2}(L_f h^2-2h)
	\end{bmatrix}, \\ \nonumber
	M_k^{3}&= \begin{bmatrix}
	\frac{-m_fL_f}{m_f+L_f} & \frac{1}{2} \\ \frac{1}{2}  & \frac{-1}{m_f+L_f}
	\end{bmatrix}.
	\end{align}
%
%
%
%
%
We first consider strongly convex functions ($m_f>0$) for which we make two parameter selections, as follows.
\begin{itemize}[leftmargin=*]
	\item By setting $p_k=\sigma_k=0$, we obtain the LMI
	\begin{align*}
	\begin{bmatrix}
	-\frac{m_f}{2}(a_{k+1}-a_k) & \frac{1}{2}(a_{k+1}-a_{k})  \\ \frac{1}{2}(a_{k+1}-a_{k})  &(\frac{L_fh^2}{2}-h)a_{k+1}\!
	\end{bmatrix} \!\preceq 0 \quad \mbox{for all} \ k.
	\end{align*}
	It is easy to verify that this matrix inequality is equivalent to the conditions $a_{k+1} \leq \rho^{-1} a_k$ and $0 \leq h \leq 2/L_f$, where $\rho = 1+m_f(L_fh^2-2h)$. Solving for $a_k$ and substituting all the parameters in \eqref{eq: Lyapunov drift}, we conclude the following convergence rate for strongly convex functions:
	\begin{align*} 
	f(x_k) - f(x_{\star}) \leq \left(1+m_f(L_fh^2-2h)\right)^k (f(x_0) - f(x_{\star})),  \quad 0 \leq h \leq \dfrac{2}{L_f}.
	\end{align*}
	Notice that the decay rate $\rho$ obeys $0 \leq \rho \leq 1$ as $h$ varies on $[0,2/L_f]$. In particular, by optimizing $\rho$ over $h$, we obtain the optimal step size $h=1/L_f$, yielding the decay rate $\rho=1-m_f/L_f$. 
	\item  By the parameter selection $a_k \equiv 0$ and $p_k=\rho^{-2k}p_0$ $\sigma_k=\lambda_0 \rho^{-2k-2}$, the LMI simplifies to
	\begin{align} \label{eq: gradient method SC LMI}
	\begin{bmatrix}
	p_{0}\!-\! \rho^2 p_0& -h p_{0}\\  -h p_{0}& h^2 p_{0}
	\end{bmatrix} + \lambda_0 \begin{bmatrix}
	\frac{-m_fL_f}{m_f+L_f} & \frac{1}{2} \\ \frac{1}{2}  & \frac{-1}{m_f+L_f}
	\end{bmatrix} \preceq 0,
	\end{align}
	which is the same LMI as the one proposed in \cite{lessard2016analysis} and yields the optimal decay rate $\rho=\max(|1-hm_f|,|1-hL_f|)$.
\end{itemize}
\medskip
We now consider convex functions ($m_f=0$). By the particular selection $p_k=p$ and $\sigma_k=\sigma$, the LMI \eqref{thm: main 1} reduces to
\begin{align} \label{eq: gradient descent weakly convex}
\begin{bmatrix}
0 & \frac{1}{2}(a_{k+1}-a_{k}-2p h+\sigma)  \\ \frac{1}{2}(a_{k+1}-a_{k}-2p h+\sigma)  & (\frac{L_fh^2}{2}-h)a_{k+1}+p h^2-\frac{\sigma}{L_f}
\end{bmatrix}\preceq 0 \quad \mbox{for all} \ k,
\end{align}
which is homogeneous in $(a_k,a_{k+1},p,\sigma)$. We can therefore assume $p=1$, without loss of generality. With this selection, the above LMI becomes equivalent to the following inequalities.
\begin{gather*}
a_{k+1}=a_{k}+2h-\sigma, \quad (\frac{L_fh^2}{2} - h)a_{k+1} + h^2-\dfrac{\sigma}{L_f} \leq 0 \quad \mbox{for all} \ k.
\end{gather*}
Assuming $a_0=0$ and solving for the fastest growing ${a_k}$ that satisfies the above constraints, we obtain the following rate bound:
%
\begin{subequations}\label{eq: gradient descent weakly convex 1}
\begin{align} 
f(x_{k}) - f(x_{\star}) \leq \dfrac{L_f\|x_0-x_{\star}\|_2^2}{Ck},
\end{align}
where $C$ is given by
\begin{align}
C= \begin{cases}
2L_fh & \mbox{for} \ 0 \leq L_fh \leq 1 \\
\dfrac{2(L_fh)^2(2-L_fh)}{(L_fh)^2-2L_fh+2} & \mbox{for} \ 1 \leq L_fh \leq 2
\end{cases}.
\end{align}
\end{subequations}
%
We have provided the detailed derivations in Appendix \ref{app: Symbolic convergence rates for the gradient method}. 
\subsubsection{Nesterov's accelerated method} \label{subsection: Nesterov's Accelerated method}
We now analyze Nesterov's accelerated method \cite{nesterov1983method} applied to $f \in \mathcal{F}(m_f,L_f)$, which consists in the following recursions:
%
	\begin{align}  \label{eq: accelerated gradient method}
	x_{k+1} & = y_k - h \nabla f(y_k), \\
	y_{k} &= x_k + \beta_k (x_k-x_{k-1}),  \nonumber
	\end{align}
%
where $\beta_k \geq 0$ is the momentum coefficient, and $h>0$ is the step size. With an appropriate tuning, this method exhibits an $\mathcal{O}(1/k^2)$ convergence rate when $m_f=0$. 
%
One such tuning is \cite{nesterov1983method,beck2009fast}
\begin{align} \label{eq: Nesterov parameters weakly convex}
\beta_k &= t_k^{-1}(t_{k-1}-1), \quad t_k = \frac{1}{2}(1+\sqrt{1+4t_{k-1}^2}),\quad t_{-1}=1, \quad 0 < h \leq L_f^{-1}.
\end{align}
Notice that by this selection, we can verify that $t_k^2-t_{k-1}^2=t_k$ and $t_{k-1} \geq (k+2)/2$. By defining the state vector $\xi_k = [x_{k-1}^\top \ x_k^\top]^\top$, we can write \eqref{eq: accelerated gradient method} in the canonical form
%
\begin{align}  \label{eq: accelerated gradient method 1}
\xi_{k+1} &= \begin{bmatrix}
0 & I_d \\ -\beta_k I_d & (1+\beta_k) I_d
\end{bmatrix}\xi_{k} + \begin{bmatrix}
0 \\ -h I_d
\end{bmatrix} \nabla f(y_k), \\
y_k &= \begin{bmatrix}
-\beta_k & (1+\beta_k) I_d
\end{bmatrix}\xi_{k}, \nonumber \\
x_k &=\begin{bmatrix}
0 & 1
\end{bmatrix}\xi_k. \nonumber 
\end{align}
%
The fixed points of \eqref{eq: accelerated gradient method 1} are $(\xi_{\star},u_{\star},y_{\star},x_{\star})=([x_{\star}^\top \ x_{\star}^\top]^\top , 0, x_{\star},x_{\star})$, where $x_{\star} \in \mathcal{X}_{\star}$ is any optimal solution to \eqref{eq: unconstrained optimization}.
%
%
Making use of Lemma \ref{prop: Quadratic bounds for convex functions}, the matrices $M_k^i \ i \in \{0,1,2,3\}$ for Nesterov's accelerated method read as
\begin{align} \label{eq: Nesterov method LMI matrices}
M_k^{0}&=  \begin{bmatrix}
A_k^\top P_{k+1} A_k \!-\! P_k& A_k^\top P_{k+1} B_k\\  B_k^\top P_{k+1} A_k& B_k^\top P_{k+1} B_k 
\end{bmatrix}, \\ \nonumber
M_k^{1}&= \begin{bmatrix}
-\frac{1}{2}m_f \beta_k^2 & \frac{1}{2}m_f\beta_k^2 & -\frac{1}{2}\beta_k \\  \frac{1}{2}m_f\beta_k^2 &  -\frac{1}{2}m_f \beta_k^2 & \frac{1}{2}\beta_k\\ -\frac{1}{2}\beta_k & \frac{1}{2}\beta_k & \frac{1}{2}L_f h^2-h
\end{bmatrix}, \\ \nonumber
M_k^{2}&= \begin{bmatrix}
-\frac{1}{2}m_f\beta_k^2 & \frac{1}{2}m_f\beta_k(\beta_k+1) & -\frac{1}{2}\beta_k \\  \frac{1}{2}m_f\beta_k(\beta_k+1) &  -\frac{1}{2}m_f(\beta_k+1)^2 & \frac{1}{2}(\beta_k+1)  \\ -\frac{1}{2}\beta_k & \frac{1}{2}(\beta_k+1) & \frac{1}{2}L_f h^2-h
\end{bmatrix}, \\ \nonumber
M_k^{3} &=\begin{bmatrix}
-\beta_kI_d & 0 \\ (1+\beta_k)I_d & 0 \\ 0 & I_d
\end{bmatrix} \begin{bmatrix}
\frac{-m_fL_f}{m_f+L_f} & \frac{1}{2} \\ \frac{1}{2}  & \frac{-1}{m_f+L_f}
\end{bmatrix}\begin{bmatrix}
-\beta_kI_d  & (1+\beta_k)I_d & 0 \\ 0 & 0 & I_d
\end{bmatrix}.
\end{align}

%
We consider convex settings ($m_f=0$). It is straightforward to verify that for the parameter selection $\sigma_k=0$, $a_k=t_{k-1}^2$ (with $a_0=1$) , and
\begin{align*}
P_k = \dfrac{1}{2h} \begin{bmatrix}
1-t_{k-1} \\ t_{k-1}
\end{bmatrix} \begin{bmatrix}
1-t_{k-1} & t_{k-1}
\end{bmatrix},
\end{align*}
the LMI \eqref{thm: main 1} holds with equality, i.e., all the entries of the matrix is zero. Therefore, Theorem \ref{thm: main result} implies
\begin{align} \label{eq: Nesterov weakly convex rate bound}
f(x_k) - f(x_{\star}) \leq \dfrac{f(x_0)-f(x_{\star})+\frac{1}{2h}\|x_0-x_{\star}\|_2^2}{t_{k-1}^2} = \mathcal{O}(\dfrac{1}{k^2}).
\end{align}
where the equality follows from the fact that $t_{k-1} \geq (k+2)/2$.

The analysis of Nesterov's method shows that finding a symbolic feasible pair $(a_k,P_k)$ to the LMI \eqref{thm: main 1} can be subtle. Nevertheless, we can also search for these parameters via a numerical scheme, as we describe next.

\begin{figure}[htbp]
	\centering
	\includegraphics[width=0.7\textwidth]{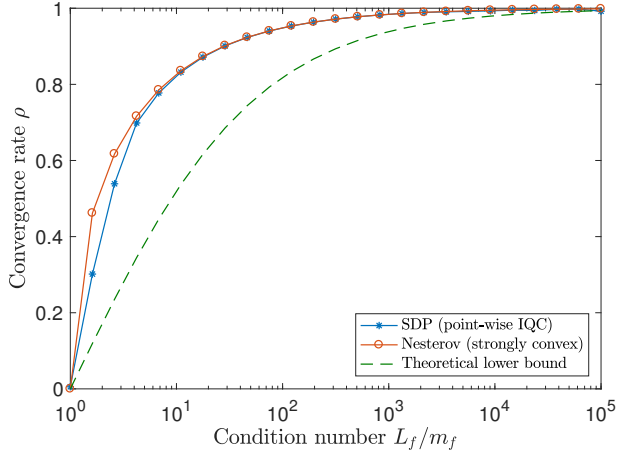}
	\caption{\small Comparison of rate bounds in Nesterov's method for different ratios $\kappa_f = L_f/m_f$ using the parameter selection $h=1/L_f$ and $\beta=\frac{\sqrt{\kappa_f}-1}{\sqrt{\kappa_f}+1}$. For this parameter selection, the analytical rate bound is $\rho = \sqrt{1-\frac{1}{ \sqrt{ \kappa_f }}}$ \cite{nesterov2013introductory}.}
	\label{fig: Nesterov_strongly_convex}
\end{figure}

\subsection{Numerical bounds for exponential rates} \label{sec: Numerical bounds for exponential rates}

We could also use the results of Theorem \ref{thm: main result} to search for the parameters $(a_k,P_k)$ numerically. This approach is particularly efficient for time-invariant algorithms  with exponential convergence. Under these assumptions, the sequence of LMIs in \eqref{thm: main 1} collapses into the single LMI in \eqref{eq: LMI exponential}, which no longer depends on the iteration index $k$. We can then use this LMI to find the exponential decay rate numerically. Explicitly, the matrix inequality \eqref{eq: LMI exponential} is an LMI in $(a_0,P_0,\lambda_0)$ for a fixed $\rho^2$. We can therefore use a bisection search aiming to find the smallest value of the convergence rate $\rho$ that satisfies \eqref{eq: LMI exponential} for some $(a_0,P_0,\lambda_0)$. Notice that the LMI in \eqref{eq: LMI exponential} is homogeneous in its decision variables. We can therefore assume $\lambda_0=1$, without loss of generality.


%

In Figure \ref{fig: Nesterov_strongly_convex}, we compare the numerical rate bounds with the theoretical lower bound and the analytical rate bound of Nesterov's method with the parameter selection $h=1/L_f$ and $\beta={(\sqrt{\kappa_f}-1)}/{(\sqrt{\kappa_f}+1)}$ \cite{nesterov2013introductory}. We observe that, the SDP yields slightly better bounds than the analytical rate bound. 

We remark that, in \cite{lessard2016analysis} the authors make use of quadratic Lyapunov functions and ``off-by-one" IQCs to obtain numerical rate bounds for strongly convex problems. They have shown that pointwise IQCs alone exhibit crude bounds and the use of off-by-one IQCs improve the numerical solutions greatly. In contrast, we have utilized \emph{nonquadratic} Lyapunov functions and \emph{pointwise} IQCs, which yield nonconservative rate bounds. This nonconservatism is due to the inclusion of the term $a_k(F(x_k)-F(x_{\star}))$ in the Lyapunov function. We conjecture that, by using off-by-one IQCs or other IQCs developed in \cite{lessard2016analysis} in our Lyapunov framework, we can further improve the numerical bounds.


\subsection{Numerical bounds for subexponential rates} 

For time-varying algorithms and nonstrongly convex functions, the convergence rate is subexponential and the LMI \eqref{thm: main 1} becomes dependent on the iteration number. In this case, a numerical approach amounts to solving an infinite sequence of LMIs to find a rate-generating sequence $\{a_k\}$. Nevertheless, we can truncate the sequence of LMIs in order to obtain rate bounds for a \emph{finite} number of iterations. Specifically, for a given $N>0$, we consider the following SDP:
\begin{align} \label{eq: SDP formulation}
&{\mbox{maximize \ \ }} \ a_{N} \\ &\mbox{subject to} \quad \mbox{for $k=0,1,\cdots,N-1$:} \nonumber \\ 
&\qquad \qquad \quad \ M_k^{0} + a_{k}M_k^{1} + (a_{k+1} - a_k) M_k^{2} +\sigma_k M_k^{3} \preceq 0, \nonumber  \\
& \quad \qquad \quad \quad \ a_{k+1}\geq a_k, \quad \sigma_k \geq 0, \quad P_k \succeq 0. \nonumber
\end{align}
with decision variables $\{(a_k,P_k,\sigma_k)\}_{k=1}^{N}$. Denoting the optimal solution of \eqref{eq: SDP formulation} by $a^\star_N$, Theorem \ref{thm: main result} immediately implies
\begin{align}
f(x_N) - f(x_{\star}) \leq \dfrac{V_0(x_0,\xi_0)}{a^\star_N}.
\end{align}
In other words, \eqref{eq: SDP formulation} searches for the smallest upper bound on the $N$-th (last) iterate suboptimality, subject to the stability constraint imposed by LMI \eqref{thm: main 1}. Notice that \eqref{eq: SDP formulation} is homogeneous in the decision variables. To get a sensible problem, we must normalize the variables by, for example, requiring all of them to add up to a positive constant. Furthermore, the $k$-th LMI in \eqref{eq: SDP formulation} is a function of $a_k$, $a_{k+1}$, $P_{k}$, $P_{k+1}$, and $\sigma_k$. This implies the SDP is banded with a fixed bandwidth independent of $N$, the number of iterations. We can exploit this sparsity structure in solving the SDP efficiently. For instance, for the Nesterov's method and $N=10^3$ iterations, solving the SDP takes less than 10 seconds to solve with an off-the-shelf solver.

In Figure \ref{fig: Nesterov_weakly_convex}, we plot numerical rate bounds obtained by solving \eqref{eq: SDP formulation} for the Nesterov's accelerated method with the parameter selection given in  \eqref{eq: Nesterov parameters weakly convex}. We also plot the analytical rate bound given in \eqref{eq: Nesterov weakly convex rate bound}. We observe that the numerical rate bound coincides with the analytical rate.

\begin{figure}[htbp]
	\centering
	\includegraphics[width=0.7\textwidth]{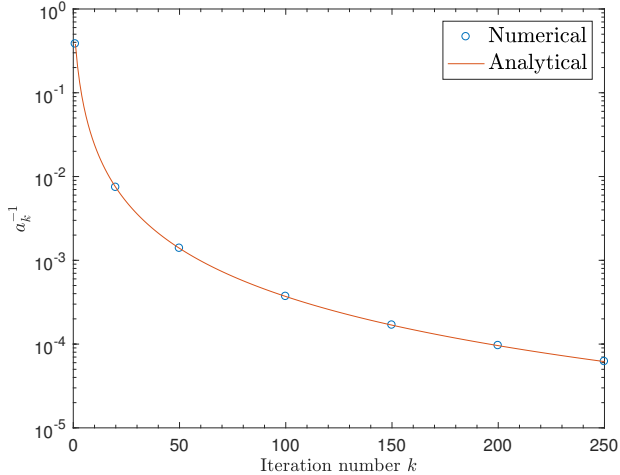}
	\caption{\small Comparison of rate bounds obtained by numerically solving the SDP in \eqref{eq: SDP formulation} and analytical rate bounds for the Nesterov's accelerated method with the parameter selection given in \eqref{eq: Nesterov parameters weakly convex}.}
	\label{fig: Nesterov_weakly_convex}
\end{figure}

%

%
\section{Composite optimization problems} \label{sec: Proximal methods}
%
In this section, we consider composite optimization problems of the form
\begin{align} \label{eq: unconstrained minimization}
\mathcal{X}_{\star} = \mathrm{argmin}_{x \in \mathbb{R}^d} \{F(x) = f(x) + g(x)\},
\end{align}
where $f \colon \mathbb{R}^d \to \mathbb{R}$ is differentiable CCP, while $g \colon \mathbb{R}^d \to \mathbb{R} \cup \{+\infty\}$ is nondifferentiable and CCP. We assume the optimal solution set $\mathcal{X}_{\star}$ is nonempty and closed, and the optimal value $F(x_{\star})$ is finite. Under these assumptions, the optimality condition for \eqref{eq: unconstrained minimization} is given by
\begin{align} \label{eq: first-order optimality}
\mathcal{X}_{\star} = \{x_{\star}\in \dom \, f \cap \dom \, g \colon 0 \in \nabla f(x_{\star})+\partial g(x_{\star})\}.
\end{align}
%
%
Formally, the objective function in \eqref{eq: unconstrained minimization} is nonsmooth and subgradient methods are very slow. Splitting methods such as proximal algorithms circumvent this issue by exploiting the special structure of the objective function to achieve comparable convergence rates to their counterparts in smooth programming. In this section, we analyze proximal algorithms using Theorem \ref{thm: main result}. To this end, we first show that we can represent these algorithms in the canonical form \eqref{eq: discrete time dynamics}, where the feedback nonlinearity $\phi$ is the generalized gradient mapping of $F$. By deriving the proximal counterpart of Lemma \ref{prop: Quadratic bounds for convex functions}, we can then immediately apply Theorem \ref{thm: main result} to proximal algorithms.

%
\subsection{Generalized gradient mapping}\label{subsection: IQCs for generalized gradient mapping} 
Let $g \colon \mathbb{R}^d \to \mathbb{R} \cup \{+\infty\}$ be a CCP function. The proximal operator $\Pi_{g,h} \colon \mathbb{R}^d \to \mathbb{R}^d$ of $g$ with parameter $h>0$ is defined as
\begin{align} \label{eq: proximal operator 1}
\Pi_{g,h}(x)=\mathrm{argmin}_{y \in \mathbb{R}^d} \ \{g(y) + \dfrac{1}{2h} \|y-x\|_2^2\}.
\end{align}
%
For the composite function in \eqref{eq: unconstrained minimization}, we define the generalized gradient mapping $\phi_{h} \colon \mathbb{R}^d \to \mathbb{R}^d$ as
\begin{align} \label{eq: generalized gradient 0}
\phi_h(x)=\dfrac{1}{h}(x-\Pi_{g,h}(x - h \nabla f(x))), \ h>0.
\end{align} 
with $\dom \, \phi_h = \dom \, f$. Notice that when $g(x)\equiv 0$ (so that $\Pi_{g,h}(x)=x$), the generalized gradient mapping simplifies to the gradient function $\nabla f$. Furthermore, we have that $\phi_h(x_{\star})=0$ for $x_{\star} \in \mathcal{X}_{\star}$, i.e., $\phi_h$ vanishes at optimality. In the following proposition, we characterize several properties of $\phi_h$, which will prove useful.
\begin{proposition}\label{lemma: IQCs for generalized gradient mapping}
	Consider the composite function $F=f+g$ with $f \! \in \! \mathcal{F}(m_f,L_f)$ and $g \in \mathcal{F}(0,\infty)$. Correspondingly, define the generalized gradient mapping $\phi_{h}$ of $F$ as in \eqref{eq: generalized gradient 0} .
	\begin{enumerate}[leftmargin=*]
		\medskip
		\item $\phi_h$ satisfies the pointwise IQC defined by $(Q_{\phi_h},x_{\star},\phi_h(x_{\star}))$, where $Q_{\phi_h}$ is given by
		\begin{align} \label{eq: generalized gradient IQC}
		Q_{\phi_h} = \begin{bmatrix}
		\dfrac{1}{2h}(\gamma_f^2-1)& \dfrac{1}{2} \\ \dfrac{1}{2} & -\dfrac{h}{2}
		\end{bmatrix} \! \otimes I_d,
		%
		\end{align}
		with $\gamma_f=\max \{|1-h L_f|,|1- h m_f|\}$.
		\medskip
		\item The following inequality
		\begin{align} \label{eq: composite function inequality}
		F(y\!-\!h \phi_h(y))\!-\!F(x) \leq & \phi_h(y)^\top (y\!-\!x)\!-\! \dfrac{m_f}{2} \|y\!-\!x\|_2^2+(\frac{1}{2}L_f h^2\!-\!h) \| \phi_h(y)\|_2^2,
		\end{align}
		holds for all $h \geq 0$ and $x,y \in \dom \, F$.
		\medskip
		\item $\phi_h(x_{\star})=0$ if and only if $x_{\star} \in \mathrm{argmin} \ F(x)$.
	\end{enumerate}
\end{proposition}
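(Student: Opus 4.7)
The plan is to establish parts (3), (1), (2) in that order, since (3) supplies the fixed-point identity on which (1) and (2) rest.

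For part (3), I would read off the prox optimality condition directly: $\phi_h(x_\star)=0$ is, by definition, the same as $x_\star=\Pi_{g,h}(x_\star-h\nabla f(x_\star))$, which by the subgradient characterization of the prox holds iff $-\nabla f(x_\star)\in\partial g(x_\star)$, i.e.\ $0\in\nabla f(x_\star)+\partial g(x_\star)$. By \eqref{eq: first-order optimality}, this is exactly the condition for $x_\star\in\mathcal{X}_\star$.

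For part (1), the idea is that the forward--backward operator $T_h(x):=\Pi_{g,h}(x-h\nabla f(x))=x-h\phi_h(x)$ is $\gamma_f$-contractive about the fixed point $x_\star$ identified in (3). Two standard facts combine: $\Pi_{g,h}$ is firmly nonexpansive and hence $1$-Lipschitz, while the gradient step $x\mapsto x-h\nabla f(x)$ is $\gamma_f$-Lipschitz on $\dom f$ for $f\in\mathcal{F}(m_f,L_f)$, with $\gamma_f=\max\{|1-hL_f|,|1-hm_f|\}$; the latter is a textbook consequence of \eqref{eq: strongly convex IQC} obtained by expanding $\|(I-h\nabla f)(x)-(I-h\nabla f)(y)\|_2^2$. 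Composing and using $T_h(x_\star)=x_\star$,
\[
\|x-h\phi_h(x)-x_\star\|_2 \leq \gamma_f\,\|x-x_\star\|_2.
\]
Squaring, expanding the cross term, moving everything to one side, and dividing by $2h$ produces the quadratic form with coefficients $(\gamma_f^2-1)/(2h)$, $1/2$, $-h/2$, which is precisely the pointwise IQC with matrix $Q_{\phi_h}$ in \eqref{eq: generalized gradient IQC}.

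For part (2), the calculation is the standard proximal descent lemma adapted to strongly convex $f$. Let $y^+:=y-h\phi_h(y)=\Pi_{g,h}(y-h\nabla f(y))$. Prox optimality supplies a subgradient $T_g(y^+)\in\partial g(y^+)$ with $\phi_h(y)=\nabla f(y)+T_g(y^+)$. I would then sum three inequalities: $L_f$-smoothness of $f$ applied to $(y^+,y)$, strong convexity of $f$ applied to $(x,y)$ in the form $f(y)\leq f(x)+\nabla f(y)^\top(y-x)-\tfrac{m_f}{2}\|y-x\|_2^2$, and the subgradient inequality $g(y^+)\leq g(x)+T_g(y^+)^\top(y^+-x)$. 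The terms involving $\nabla f(y)$ and $T_g(y^+)$ recombine into $\phi_h(y)^\top(y^+-x)$, the quadratic $\tfrac{L_f}{2}\|y^+-y\|_2^2$ becomes $\tfrac{L_f h^2}{2}\|\phi_h(y)\|_2^2$, and substituting $y^+-x=(y-x)-h\phi_h(y)$ produces a $-h\|\phi_h(y)\|_2^2$ contribution that combines with $\tfrac{L_f h^2}{2}\|\phi_h(y)\|_2^2$ to give the advertised coefficient $(\tfrac12 L_f h^2-h)$, yielding \eqref{eq: composite function inequality}.

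The step that deserves the most care is part (1): one must verify that the correct reference input--output pair is $(x_\star,\phi_h(x_\star))=(x_\star,0)$, which is only legitimate \emph{after} part (3) has been used to conclude $T_h(x_\star)=x_\star$, and one must check that composing the two Lipschitz bounds produces the coefficient $\gamma_f^2$ on the nose so that the quadratic form matches $Q_{\phi_h}$ exactly rather than up to a slack. Parts (2) and (3) are largely mechanical once the prox optimality relation is written down.
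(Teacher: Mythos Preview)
Your proposal is correct and follows essentially the same route as the paper: part~(1) via the composition of the $1$-Lipschitz prox with the $\gamma_f$-Lipschitz gradient step and then expanding the square, part~(2) by summing the smoothness, strong convexity, and subgradient inequalities and using $\phi_h(y)=\nabla f(y)+T_g(y^+)$, and part~(3) via the prox optimality condition.

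One small point: the dependency you impose of part~(1) on part~(3) is unnecessary. The pointwise IQC is stated with reference pair $(x_\star,\phi_h(x_\star))$ for an \emph{arbitrary} $x_\star$, not specifically a minimizer; there is no need to set $\phi_h(x_\star)=0$. The paper simply keeps $\phi_h(x_\star)$ in the expansion, writing
\[
\bigl\|(x-x_\star)-h\bigl(\phi_h(x)-\phi_h(x_\star)\bigr)\bigr\|_2^2\le \gamma_f^2\|x-x_\star\|_2^2,
\]
and the IQC with matrix $Q_{\phi_h}$ follows directly. So your caution that ``the correct reference input--output pair is $(x_\star,0)$, legitimate only after (3)'' is misplaced; the three parts are in fact logically independent. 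Part~(2) likewise makes no reference to a minimizer.
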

\begin{proof}
	See Appendix \ref{lemma: IQCs for generalized gradient mapping proof}.
\end{proof}

\subsection{Proximal algorithms} Using the definition of generalized gradient mapping in \eqref{eq: generalized gradient 0}, we can represent proximal algorithms with the same state-space structure as in \eqref{eq: discrete time dynamics}, where the feedback nonlinearity is $\phi=\phi_h$. For example, the Nesterov's accelerated proximal gradient method is defined by
\begin{align} \label{eq: accelerated proximal gradient method 0}
x_{k+1} &= \Pi_{g,h}(y_k-h\nabla f(y_k)), \\
y_k &= x_k + \beta_k (x_k-x_{k-1}), \nonumber
\end{align}
%
which, by using \eqref{eq: generalized gradient 0}, can be rewritten as
\begin{align} \label{eq: accelerated proximal gradient method 00}
x_{k+1} &= x_k + \beta_k (x_k-x_{k-1}) -h \phi_h(y_k), \\
y_k &= x_k + \beta_k (x_k-x_{k-1}). \nonumber
\end{align}
By defining the state vector $\xi_k = [x_{k-1}^\top \ x_k^\top]^\top \in \mathbb{R}^{2d}$, the corresponding state-space matrices $(A_k,B_k,C_k)$ are given by
\begin{align} \label{eq: proximal nesterov matrices}
\left[
\begin{array}{c|c}
A_k & B_k \\
\hline
C_k & 0
\end{array}
\right] = \left[
\begin{array}{c|c}
\begin{matrix}
0 &I_d \\ -\beta_kI_d & (\beta_k+1)I_d
\end{matrix} & \begin{matrix} 0 \\ -hI_d\end{matrix} \\
\hline
\begin{matrix} -\beta_kI_d & (\beta_k+1)I_d\end{matrix} & 0
\end{array}
\right].
\end{align}
Recall the assumption that the sequences $\{x_k\}$ and $\{y_k\}$ converge to the same fixed point in the optimal set. Since $\phi_h$ is zero at optimality, we must therefore have that $\phi_{h}(y_{\star})=\phi_{h}(x_{\star})=0$. In other words, 
%
%
%
%
the fixed points satisfy
\begin{align} \label{eq: discrete time dynamics proximal fixed points}
\xi_{\star} = A_k \xi_{\star}, \quad  y_\star = C_k \xi_{\star}, \quad  u_{\star} =\phi_{h}(y_\star) = 0, \quad x_{\star}&=E_k \xi_{\star}=y_{\star}, \quad \mbox{for all } k.
\end{align}
Having characterized the generalized gradient mapping with quadratic constraints, we are now ready to develop the proximal counterpart of Lemma \ref{prop: Quadratic bounds for convex functions}.
\begin{lemma}\label{prop: Quadratic bounds for composite convex functions}
	Let $x_{\star} \in \mathrm{argmin} \ F(x)$ be a minimizer of $F=f+g$ with a finite optimal value $F(x_{\star})$, where $f \in \mathcal{F}(m_f,L_f)$ and $g \in \mathcal{F}(0,\infty)$. Consider a proximal first-order algorithm in the state-space form \eqref{eq: discrete time dynamics} with $\phi=\phi_h$ defined as in \eqref{eq: generalized gradient 0}. Suppose the fixed points $(\xi_{\star},u_{\star},y_{\star},$ $x_{\star})$ satisfy 
	\begin{align} 
	\xi_{\star} = A_k \xi_{\star}, \ \ y_\star = C_k \xi_{\star}, \ \ u_{\star} =\phi_{h}(y_{\star})=0, \ \ x_{\star}=E_k \xi_{\star}=y_{\star} \quad \mbox{for all } k.
	\end{align}
	Then the following inequalities hold for all $k$.
	\begin{subequations} \label{eq: quadratic bounds composite}
		\begin{align} 
			F(x_{k+1}) - F(x_k) &\leq e_k^\top M_k^{1} e_k, \label{eq: quadratic bounds composite 1}\\
			F(x_{k+1}) - F(x_{\star}) &\leq e_k^\top M_k^{2}e_k, \label{eq: quadratic bounds composite 2}\\
			0 &\leq e_k^\top M_k^{3} e_k, \label{eq: quadratic bounds composite 3}
		\end{align}
	\end{subequations}
	where $e_k = [(\xi_k-\xi_{\star})^\top \ (u_k-u_{\star})^\top]^\top$ and $M_k^1, M_k^2,M_k^3$ are given by
	\begin{align} \label{lemma: LMI matrices convex composite functions 1}
	M_k^{1} \!&=\begin{bmatrix}
	C_k\!-\!E_k & 0 \\ 0 & I_d
	\end{bmatrix}^\top \begin{bmatrix}
	-\frac{m_f}{2} & \frac{1}{2}\\ \frac{1}{2}& (\frac{1}{2}L_fh^2\!-\!h)
	\end{bmatrix}  \begin{bmatrix}
	C_k\!-\!E_k & 0 \\ 0 & I_d
	\end{bmatrix}, \\ \nonumber
	M_k^{2} &=\begin{bmatrix}
	C_k & 0 \\ 0 & I_d
	\end{bmatrix}^\top \begin{bmatrix}
	-\frac{m_f}{2} & \frac{1}{2}\\ \frac{1}{2}& (\frac{1}{2}L_fh^2\!-\!h)
	\end{bmatrix}  \begin{bmatrix}
	C_k & 0 \\ 0 & I_d
	\end{bmatrix}, \\
	M_k^{3} &=\begin{bmatrix}
	C_k & 0 \\ 0 & I_d
	\end{bmatrix}^\top Q_{\phi_h} \begin{bmatrix}
	C_k & 0 \\ 0 & I_d
	\end{bmatrix}. \nonumber
	\end{align}
\end{lemma}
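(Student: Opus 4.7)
The plan is to mirror the proof of Lemma \ref{prop: Quadratic bounds for convex functions}, replacing the two-step decomposition (Lipschitz descent + strong convexity) by a single application of the composite descent inequality \eqref{eq: composite function inequality} from Proposition \ref{lemma: IQCs for generalized gradient mapping}. The key observation is that for a proximal algorithm written in the canonical form \eqref{eq: discrete time dynamics} with $\phi = \phi_h$, the primary iterate satisfies the structural identity $x_{k+1} = y_k - h\phi_h(y_k) = y_k - h u_k$; this is exactly what a proximal step produces, and for the accelerated proximal case it can be read off from the state-space matrices in \eqref{eq: proximal nesterov matrices}. Combined with the fixed-point condition $y_\star = x_\star$, this also gives the algebraic identities $y_k - x_k = (C_k - E_k)(\xi_k - \xi_\star)$ and $y_k - x_\star = y_k - y_\star = C_k(\xi_k - \xi_\star)$, which are the bridge between the analytic inequalities and their quadratic-form representations in $e_k$.

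For \eqref{eq: quadratic bounds composite 1}, I would apply \eqref{eq: composite function inequality} with $(y,x) = (y_k, x_k)$ to obtain
\begin{align*}
F(x_{k+1}) - F(x_k) \leq u_k^\top (y_k - x_k) - \tfrac{m_f}{2}\|y_k - x_k\|_2^2 + \bigl(\tfrac{1}{2}L_f h^2 - h\bigr)\|u_k\|_2^2,
\end{align*}
then substitute $y_k - x_k = (C_k - E_k)(\xi_k - \xi_\star)$ and $u_k = u_k - u_\star$ to identify the right-hand side with $e_k^\top M_k^1 e_k$ in the claimed form. For \eqref{eq: quadratic bounds composite 2}, the same inequality with $(y,x) = (y_k, x_\star)$ gives the bound on $F(x_{k+1}) - F(x_\star)$, and substituting $y_k - x_\star = C_k(\xi_k - \xi_\star)$ produces $e_k^\top M_k^2 e_k$. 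For \eqref{eq: quadratic bounds composite 3}, I would invoke the pointwise IQC from part 1 of Proposition \ref{lemma: IQCs for generalized gradient mapping} at reference $(x_\star, \phi_h(x_\star)) = (x_\star, 0)$ and input $y_k$; writing $y_k - y_\star = C_k(\xi_k - \xi_\star)$ and $\phi_h(y_k) - \phi_h(y_\star) = u_k - u_\star$, the IQC expressed in the enlarged coordinate $e_k$ becomes exactly $e_k^\top M_k^3 e_k \geq 0$.

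Given Proposition \ref{lemma: IQCs for generalized gradient mapping}, the remaining work is essentially bookkeeping: rewriting the inequalities in the coordinates $e_k = [(\xi_k - \xi_\star)^\top,\ (u_k - u_\star)^\top]^\top$ and collecting terms into the claimed sandwich structure $[\,\cdot\,]^\top [\,\cdot\,][\,\cdot\,]$. The only point that requires care — and which I expect to be the main conceptual step — is verifying that the first bound genuinely controls $F(x_{k+1}) - F(x_k)$ (as opposed to $F(y_k - h u_k) - F(x_k)$); this rests entirely on the structural identity $x_{k+1} = y_k - h u_k$, which holds for the proximal algorithms considered because the update rule \eqref{eq: accelerated proximal gradient method 00} encodes one proximal-gradient step. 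All genuine analytic content — smoothness of $f$, convexity of $g$, and the firm nonexpansiveness of the prox — is already absorbed into Proposition \ref{lemma: IQCs for generalized gradient mapping}.
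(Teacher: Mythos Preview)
Your proposal is correct and follows essentially the same route as the paper: apply the composite descent inequality \eqref{eq: composite function inequality} with $(y,x)=(y_k,x_k)$ and $(y,x)=(y_k,x_\star)$ to obtain \eqref{eq: quadratic bounds composite 1} and \eqref{eq: quadratic bounds composite 2}, and invoke the pointwise IQC from Proposition~\ref{lemma: IQCs for generalized gradient mapping} for \eqref{eq: quadratic bounds composite 3}, using the identities $y_k-x_k=(C_k-E_k)(\xi_k-\xi_\star)$, $y_k-y_\star=C_k(\xi_k-\xi_\star)$, and $u_\star=0$. Your explicit flagging of the structural identity $x_{k+1}=y_k-hu_k$ is a good clarification that the paper leaves implicit.
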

\begin{proof}
	See Appendix \ref{prop: Quadratic bounds for composite convex functions proof}.
\end{proof}

\begin{remark} \normalfont 
	In \cite{lessard2016analysis}, the authors use a different block diagonal representation of proximal algorithms, in which the linear component is in parallel feedback connections with the gradient function $\nabla f$, as well as the subdifferential operator $\partial g$. Then, each nonlinear block is described by its corresponding IQC, i.e., the IQC of gradient mappings and subdifferential operators. In contrast, we collectively represent all the nonlinearities in a single feedback component (the generalized gradient mapping), whose IQC is given in Lemma \ref{lemma: IQCs for generalized gradient mapping}. 
\end{remark}

In the following, we use Lemma \ref{prop: Quadratic bounds for composite convex functions} in conjunction with Theorem \ref{thm: main result} to analyze the proximal gradient method and the proximal variant of Nesterov's accelerated method.

\subsubsection{Proximal gradient method}
The classical proximal gradient method is defined by the recursion
\begin{align} \label{eq: proximal gradient method 1}
x_{k+1} = \Pi_{h g}(x_k - h \nabla f(x_k)),
\end{align}
which, by using the definition of the generalized gradient mapping in \eqref{eq: generalized gradient 0}, can be written as
\begin{align}
x_{k+1} = x_k - h \phi_h(x_k).
\end{align}
The state-space matrices are therefore given by $(A_k,B_k,C_k,E_k)=(I_d,-hI_d,I_d,I_d)$. By selecting $P_k=p_k I_d, \ p_k \geq 0$, the matrices $ M_k^{i} \  i=0,1,2,3$ are given by
\begin{subequations} \label{eq: data matrices classical proximal}
	\begin{align}
	M_k^{0}&=  \begin{bmatrix}
	p_{k+1}\!-\! p_k& -h  p_{k+1}\\  -h  p_{k+1}& h ^2 p_{k+1}
	\end{bmatrix} \otimes I_d, \\
	M_k^{1}&\!=\! \begin{bmatrix}
	0 \!&\! 0 \\ 0\!&\!  \frac{1}{2}(L_f h ^2\!-\!2h )
	\end{bmatrix} \otimes I_d, \\
	M_k^{2}&= \begin{bmatrix}
	-\frac{1}{2}m_f & \frac{1}{2} \\ \frac{1}{2}&  \frac{1}{2}(L_f h ^2-2h )
	\end{bmatrix}\otimes I_d,  \\
	M_k^{3}&= \begin{bmatrix}
	\frac{1}{2h }(\gamma_f^2-1)& \frac{1}{2} \\ \frac{1}{2} & -\frac{h }{2}
	\end{bmatrix} \! \otimes I_d,
	\end{align}
\end{subequations}
where $\gamma_f = \max \{|1-h  L_f|,|1-h  m_f|$. 

\medskip

\emph{Strongly Convex Case.} We first consider the selection $a_k \equiv 0$ for strongly convex settings. Then the LMI \eqref{eq: data matrices classical proximal} simplifies to
\begin{align*}
\begin{bmatrix}
p_{k+1}\!-\! p_k& -h  p_{k+1}\\  -h  p_{k+1}& h ^2 p_{k+1}
\end{bmatrix} + \sigma_k \begin{bmatrix}
\frac{\gamma_f^2-1}{2h }& \frac{1}{2} \\ \frac{1}{2} & -\frac{h }{2}
\end{bmatrix} \leq 0.
\end{align*}
It can be verified that the above LMI is equivalent to the conditions 
\begin{align*}
\sigma_k/(2h ) \leq p_k/\gamma_f^2, \quad p_{k+1}-p_k \leq \sigma_k(1-\gamma_f^2)/(2h ).
\end{align*}
These two conditions together imply $p_{k+1} \leq p_k/\gamma_f^2$. Therefore, we can write $p_k=\gamma_f^{-2k} p_0, \ p_0>0$. Using the bound \eqref{eq: exponential convergence 0}, we can establish the bound
\begin{align*}
\|x_k\!-\!x_{\star}\|_2^2 \leq \left(\max \{|1\!-\!h  L_f|,|1\!-\!h  m_f|\}\right)^{2k} \|x_0\!-\!x_{\star}\|_2^2.
\end{align*}
%
%
On the other hand, setting $p_k\equiv 0$ in \eqref{eq: data matrices classical proximal} yields the LMI
\begin{align*}
\begin{bmatrix}
-\dfrac{m_f}{2} (a_{k+1}-a_k)& \dfrac{a_{k+1}-a_k}{2} \\ \dfrac{a_{k+1}-a_k}{2} & (\dfrac{L_fh ^2}{2}-h ) a_{k+1}
\end{bmatrix}\preceq 0.
\end{align*}
Omitting the details, we obtain from the above LMI that $a_{k+1} \leq \rho^{-2} a_k$ and $0 \leq h  \leq 2/L_f$, where $\rho^2 = 1+m_f(L_fh ^2-2h )$. Substituting $a_k$ in \eqref{eq: exponential convergence 1} yields the bound
\begin{align*}
F(x_k)\!-\!F(x_{\star}) \leq (1+m_f(L_fh ^2\!-\!2h ))^k (F(x_0)-F(x_{\star})).
\end{align*}
In particular, the optimal decay rate is attained at $h =1/L_f$, and is equal to $\rho=1-m_f/L_f$.

\medskip

\emph{Convex Case.} When the differentiable component of the objective is convex ($m_f=0$), we select $p_k=p>0, \sigma_k=\sigma$ in \eqref{eq: data matrices classical proximal} to arrive at the LMI
\begin{align*}
\begin{bmatrix}
\dfrac{\sigma}{2h }(\gamma_f^2-1) & \dfrac{1}{2}(a_{k+1}-a_k-2ph+\sigma)  \\ \dfrac{1}{2}(a_{k+1}-a_k-2ph+\sigma)  & (\dfrac{L_fh ^2}{2}-h ) a_{k+1}+p h ^2 - \dfrac{\sigma h}{2} \end{bmatrix}\preceq 0.
\end{align*}
To further simplify the LMI, we take $\sigma=0$. Then the LMI enforces that 
\begin{align*}
a_{k+1}=a_k+2ph, \quad a_0\geq 0, \quad (L_fh ^2/2-h )(a_{k+1} )+p h ^2 \leq 0
\end{align*}
Solving for $a_k$ leads to
\begin{align*}
F(x_k)-F(x_{\star}) \leq \dfrac{a_0 (F(x_0)-F(x_{\star}))+p \|x_0-x_{\star}\|_2^2}{a_0+2 p h  k}.
\end{align*}
In particular, if $a_0=0$, then it must hold that $0 \leq h  \leq 1/L_f$, and we recover the convergence result in \cite[Theorem 3.1]{beck2009fast}.

\subsubsection{Accelerated proximal gradient method}

Consider the proximal variant of Nesterov's accelerated method outlined in \eqref{eq: accelerated proximal gradient method 0}, for which the state-space matrices are given in \eqref{eq: proximal nesterov matrices}. Making use of Lemma \ref{prop: Quadratic bounds for composite convex functions}, the matrices $M_k^i \ i \in \{0,1,2,3\}$ read as
\begin{align} \label{eq: Nesterov method proximal LMI matrices}
M_k^{0}&=  \begin{bmatrix}
A_k^\top P_{k+1} A_k \!-\! P_k& A_k^\top P_{k+1} B_k\\  B_k^\top P_{k+1} A_k& B_k^\top P_{k+1} B_k 
\end{bmatrix}, \\ \nonumber
M_k^{1}&= \begin{bmatrix}
-\frac{1}{2}m_f \beta_k^2 & \frac{1}{2}m_f\beta_k^2 & -\frac{1}{2}\beta_k \\  \frac{1}{2}m_f\beta_k^2 &  -\frac{1}{2}m_f \beta_k^2 & \frac{1}{2}\beta_k\\ -\frac{1}{2}\beta_k & \frac{1}{2}\beta_k & \frac{1}{2}L_f h^2-h
\end{bmatrix}, \\ \nonumber
M_k^{2}&= \begin{bmatrix}
-\frac{1}{2}m_f\beta_k^2 & \frac{1}{2}m_f\beta_k(\beta_k+1) & -\frac{1}{2}\beta_k \\  \frac{1}{2}m_f\beta_k(\beta_k+1) &  -\frac{1}{2}m_f(\beta_k+1)^2 & \frac{1}{2}(\beta_k+1)  \\ -\frac{1}{2}\beta_k & \frac{1}{2}(\beta_k+1) & \frac{1}{2}L_f h^2-h
\end{bmatrix}, \\ \nonumber
M_k^{3} &=\begin{bmatrix}
-\beta_kI_d & 0 \\ (1+\beta_k)I_d & 0 \\ 0 & I_d
\end{bmatrix} \begin{bmatrix}
\frac{1}{2h}(\gamma_f^2-1) I_d & \frac{1}{2}I_d \\ \frac{1}{2}I_d & -\frac{h}{2}I_d
\end{bmatrix} \begin{bmatrix}
-\beta_kI_d  & (1+\beta_k)I_d & 0 \\ 0 & 0 & I_d
\end{bmatrix}.
\end{align}
Observe that the matrices $M_k^0, M_k^1$, and $M_k^2$ are precisely the same as those of Nesterov's method without proximal operation. The only difference is in $M_k^3$. As a result, by setting $\sigma_k=0$ (the coefficient of $M_k^3$) in the LMI \eqref{thm: main 1}, the analysis of Nesterov's accelerated method in $\S$\ref{subsection: Nesterov's Accelerated method} immediately applies to the proximal variant \cite{8262759}.


%
%
%

\begin{remark}[Gradient methods with projection]
	\rm For the case that $g(x)=\mathbb{I}_{\mathcal{X}}(x)$ is the indicator function of a nonempty, closed convex set $\mathcal{X} \subset \mathbb{R}^d$, the proximal operator $\Pi_{g,h}$ reduces to projection onto $\mathcal{X}$. Due to projection, we must have $x_{k} \in \mathcal{X}$ for all $k$, implying $g(x_k)=0$. Therefore, the convergence result of Theorem \ref{thm: main result} holds for the suboptimality $f(x_{k})-f(x_{\star})$.
\end{remark}

\section{Further topics}
In this section, we consider further applications of the developed framework, namely, calculus of IQCs for various operators in optimization, continuous-time models and, more importantly, algorithm design. 
\subsection{Calculus of IQCs} \label{sec: Calculus of IQCs}
We now describe some operations on mappings from an IQC perspective, namely, inversion, affine operations, and function composition. These operations form a calculus that is useful for determining IQCs for commonly used nonlinear operators in optimization algorithms, such as proximal operators, projection operators, reflection operators, etc., and their compositions.

 It directly follows from the definition of pointwise IQCs in \eqref{eq: IQC} that if $\phi$ satisfies multiple pointwise IQCs defined by $(Q_{\phi,i},$ $x_{\star},$ $\phi(x_{\star}))$, $i=1,2,\ldots,\ell$, it also satisfies the pointwise IQC defined by $(\sum_{i=1}^{\ell} \! \sigma_i Q_{\phi,i}$ $,x_{\star},$ $\phi(x_{\star}))$, where $\sigma_i \geq 0,\ i=1,2,\ldots,\ell$. Further, $\phi$ also satisfies the IQC defined by $(Q,x_{\star},\phi(x_{\star}))$ for any $Q \succeq Q_{\phi}$. In the next two lemmas, we study the effect of inversion and affine transformation on IQCs.
\begin{lemma}[IQC for inversion] \label{eq: IQC inversion}
	Consider an invertible map $\phi \colon \mathbb{R}^d \to \mathbb{R}^d$ with $\phi^{-1}(\dom \, \phi) \subseteq \dom \, \phi$ satisfying the pointwise IQC defined by $(Q_{\phi},x_{\star},\phi(x_{\star}))$. Then, the inverse map $\phi^{-1} \colon \mathbb{R}^d \to \mathbb{R}^d$ satisfies the pointwise IQC defined by $(Q_{\phi^{-1}}$,$\phi(x_{\star})$,$x_{\star})$, where
	\begin{align} \label{eq: IQC inversion 1}
	Q_{\phi^{-1}} = \begin{bmatrix}
	0 & I_d \\ I_d & 0
	\end{bmatrix} Q_{\phi} \begin{bmatrix}
	0 & I_d \\ I_d & 0
	\end{bmatrix}.
	\end{align}
\end{lemma}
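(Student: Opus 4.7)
The plan is to prove this by an elementary change of variables. The claim is essentially that the IQC for $\phi^{-1}$ at the reference pair $(\phi(x_\star), x_\star)$ is obtained from the IQC for $\phi$ at $(x_\star, \phi(x_\star))$ by swapping the roles of input and output, and that this swap corresponds exactly to conjugating $Q_\phi$ by the block permutation matrix $J = \begin{bmatrix} 0 & I_d \\ I_d & 0 \end{bmatrix}$.

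First I would introduce the change of variables $y = \phi(x)$, so that $x = \phi^{-1}(y)$, and similarly set $y_\star = \phi(x_\star)$, so that $x_\star = \phi^{-1}(y_\star)$. The hypothesis $\phi^{-1}(\dom\,\phi) \subseteq \dom\,\phi$ ensures that as $y$ ranges over the domain of $\phi^{-1}$, the point $x = \phi^{-1}(y)$ indeed lies in $\dom\,\phi$, which is exactly what is needed to invoke the IQC for $\phi$. Rewriting the IQC for $\phi$ in these variables gives
\begin{align*}
\begin{bmatrix} \phi^{-1}(y) - x_\star \\ y - y_\star \end{bmatrix}^\top Q_\phi \begin{bmatrix} \phi^{-1}(y) - x_\star \\ y - y_\star \end{bmatrix} \geq 0.
\end{align*}

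Next I would observe that the vector in this quadratic form is the swap of the natural input-output vector for $\phi^{-1}$, namely $[\,(y-y_\star)^\top\ (\phi^{-1}(y)-x_\star)^\top\,]^\top$. Using $J^\top = J$ and $J \begin{bmatrix} y - y_\star \\ \phi^{-1}(y) - x_\star \end{bmatrix} = \begin{bmatrix} \phi^{-1}(y) - x_\star \\ y - y_\star \end{bmatrix}$, the inequality becomes
\begin{align*}
\begin{bmatrix} y - y_\star \\ \phi^{-1}(y) - x_\star \end{bmatrix}^\top J\, Q_\phi\, J \begin{bmatrix} y - y_\star \\ \phi^{-1}(y) - x_\star \end{bmatrix} \geq 0,
\end{align*}
which is precisely the pointwise IQC for $\phi^{-1}$ at the reference pair $(\phi(x_\star), x_\star)$ with $Q_{\phi^{-1}} = J Q_\phi J$, as claimed in \eqref{eq: IQC inversion 1}.

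There is essentially no obstacle here: the proof is a one-line symmetric substitution plus a permutation identity. The only subtlety worth flagging is the domain issue, which is why the assumption $\phi^{-1}(\dom\,\phi) \subseteq \dom\,\phi$ appears in the statement; without it one could not guarantee that the IQC for $\phi$ is applicable at the transformed point $x = \phi^{-1}(y)$. Beyond that, the conclusion follows immediately.
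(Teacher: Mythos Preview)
Your proof is correct and follows essentially the same approach as the paper: both arguments substitute $x \leftarrow \phi^{-1}(\cdot)$ into the IQC for $\phi$ and then identify the resulting vector as the block-permuted input-output pair for $\phi^{-1}$, yielding $Q_{\phi^{-1}} = J Q_\phi J$. Your version is slightly more explicit about the role of the domain hypothesis, but otherwise the two proofs are the same one-line change of variables.
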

\begin{proof}
	By the substitution $x \leftarrow \phi^{-1}(x)$ in \eqref{eq: IQC}, we obtain
	\begin{align} \label{eq: IQC inversion 2}
	\begin{bmatrix}
	\phi^{-1}(x)-\phi^{-1}(x_{\star})\\ x-x_{\star}
	\end{bmatrix}^\top  Q_{\phi} \,     \begin{bmatrix}
	\phi^{-1}(x)-\phi^{-1}(x_{\star})\\ x-x_{\star}
	\end{bmatrix} \geq 0.
	\end{align}
	Further, we have 
	\begin{align} \label{eq: IQC inversion 3}
	\begin{bmatrix}
	\phi^{-1}(x)-\phi^{-1}(x_{\star})\\ x-x_{\star}
	\end{bmatrix} = \begin{bmatrix}
	0 & I_d \\ I_d & 0
	\end{bmatrix}     \begin{bmatrix}
	x-x_{\star}\\ \phi^{-1}(x)-\phi^{-1}(x_{\star})
	\end{bmatrix}.
	\end{align}
	Substituting \eqref{eq: IQC inversion 3} in \eqref{eq: IQC inversion 2} yields \eqref{eq: IQC inversion 1}.
\end{proof}
\begin{lemma}[IQC for affine operations] \label{eq: IQC affine transformation}
	Consider a map $\phi \colon \mathbb{R}^d \to \mathbb{R}^d$ satisfying the pointwise IQC defined by $(Q_{\phi},x_{\star},\phi(x_{\star}))$. Correspondingly, define the map $\psi(x)=S_2x + S_1 \phi(S_0x)$ with $S_0(\dom \, \phi) \subseteq \dom \, \phi$, where $S_0,S_1,S_2 \in \mathbb{R}^{d\times d}$, and $S_1$ is invertible. Then, $\psi$ satisfies the pointwise IQC defined by $(Q_{\psi},x_{\star},\psi(x_{\star}))$, where
	\begin{align} \label{eq: IQC affine transformation 1}
	Q_{\psi} = \begin{bmatrix}
	S_0^\top & -(S_1^{-1}S_2)^\top \\ 0 & S_1^{-1}
	\end{bmatrix} Q_{\phi} \begin{bmatrix}
	S_0 & 0 \\ -S_1^{-1}S_2 & (S_1^{-1})^{\top}
	\end{bmatrix}.
	\end{align}
\end{lemma}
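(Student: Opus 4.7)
The plan is to reduce the IQC for $\psi$ to the one already available for $\phi$ via a linear change of coordinates in the augmented vector $[(x-x_\star)^\top,\ (\psi(x)-\psi(x_\star))^\top]^\top$. The structure of $\psi$ as an affine combination of $S_2 x$ and $S_1 \phi(S_0 x)$ means the ``input-output'' pair of $\psi$ differs from that of $\phi$ by an invertible linear transformation, so the quadratic form transforms by congruence.

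First I would compute the increment
\[
\psi(x) - \psi(x_\star) = S_2(x - x_\star) + S_1\bigl[\phi(S_0 x) - \phi(S_0 x_\star)\bigr],
\]
which follows directly from the definition of $\psi$ together with linearity of $S_0$ and $S_2$. Invertibility of $S_1$ then yields
\[
\phi(S_0 x) - \phi(S_0 x_\star) = -S_1^{-1} S_2 (x - x_\star) + S_1^{-1}\bigl(\psi(x) - \psi(x_\star)\bigr).
\]
Combining this with the trivial identity $S_0 x - S_0 x_\star = S_0(x - x_\star)$ gives the block-matrix identity
\[
\begin{bmatrix} S_0 x - S_0 x_\star \\ \phi(S_0 x) - \phi(S_0 x_\star) \end{bmatrix}
= T \begin{bmatrix} x - x_\star \\ \psi(x) - \psi(x_\star) \end{bmatrix},
\qquad
T := \begin{bmatrix} S_0 & 0 \\ -S_1^{-1} S_2 & S_1^{-1} \end{bmatrix}.
\]

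Next I would invoke the pointwise IQC of $\phi$ at the pair $(S_0 x, S_0 x_\star)$. The hypothesis $S_0(\dom\,\phi) \subseteq \dom\,\phi$ ensures that the evaluation is legitimate, and the inequality \eqref{eq: IQC} then gives
\[
\begin{bmatrix} S_0 x - S_0 x_\star \\ \phi(S_0 x) - \phi(S_0 x_\star) \end{bmatrix}^\top Q_\phi
\begin{bmatrix} S_0 x - S_0 x_\star \\ \phi(S_0 x) - \phi(S_0 x_\star) \end{bmatrix} \geq 0.
\]
Substituting the block-matrix identity converts this into
\[
\begin{bmatrix} x - x_\star \\ \psi(x) - \psi(x_\star) \end{bmatrix}^\top T^\top Q_\phi T
\begin{bmatrix} x - x_\star \\ \psi(x) - \psi(x_\star) \end{bmatrix} \geq 0,
\]
which is precisely the pointwise IQC for $\psi$ at reference point $x_\star$ with kernel $Q_\psi = T^\top Q_\phi T$. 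A direct expansion of this congruence recovers the factorization displayed in \eqref{eq: IQC affine transformation 1}.

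The proof is essentially a bookkeeping exercise; I do not expect any technical obstacle beyond (i) recognizing that the reference point for $\phi$'s IQC should be taken as $S_0 x_\star$ (not $x_\star$), which is what the inclusion $S_0(\dom\,\phi) \subseteq \dom\,\phi$ is there to guarantee, and (ii) carefully tracking transposes so that the final factorization $T^\top Q_\phi T$ matches the block matrix written in the statement.
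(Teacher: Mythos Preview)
Your proof is correct and follows exactly the same route as the paper's own proof: substitute $x\leftarrow S_0 x$ in the IQC for $\phi$, then use the block identity with your matrix $T$ to rewrite the quadratic form in terms of $(x-x_\star,\psi(x)-\psi(x_\star))$. Note that your $T$ (with $S_1^{-1}$ in the lower-right block) is precisely what the paper uses in its proof; the $(S_1^{-1})^\top$ appearing in the displayed $Q_\psi$ of the statement is a typo, so do not try to force your factorization to match it.
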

\begin{proof}
	By the substitution $x \leftarrow S_0 x$ in \eqref{eq: IQC}, we obtain
	\begin{align} \label{eq: IQC affine transformation 2}
	\begin{bmatrix}
	S_0x-S_0x_{\star}\\ \phi(S_0x)-\phi(S_0x_{\star})
	\end{bmatrix}^\top  Q_{\phi} \,     \begin{bmatrix}
	S_0x-S_0x_{\star}\\ \phi(S_0x)-\phi(S_0x_{\star})
	\end{bmatrix}\geq 0.
	\end{align}
	Further, since $\psi(x)=S_2x + S_1 \phi(S_0x)$, we have 
	\begin{align} \label{eq: IQC affine transformation 3}
	\begin{bmatrix}
	S_0x-S_0x_{\star}\\ \phi(S_0x)-\phi(S_0x_{\star})
	\end{bmatrix} = \begin{bmatrix}
	S_0 & 0 \\ -S_1^{-1}S_2 & S_1^{-1}
	\end{bmatrix}     \begin{bmatrix}
	x-x_{\star}\\ \psi(x)-\psi(x_{\star})
	\end{bmatrix}.
	\end{align}
	Substituting \eqref{eq: IQC affine transformation 3} in \eqref{eq: IQC affine transformation 2} yields \eqref{eq: IQC affine transformation 1}.
\end{proof}

Finally, we study the composition of mappings. Specifically, consider the cascade connection of two mappings $\phi_1,\phi_2 \colon \mathbb{R}^d \to \mathbb{R}^d,\ i=1,2$ as in Figure \ref{fig: composition}, where $y = \phi_1(x)$ and $z=\phi_2(y)$. Further assume $\phi_1$ and $\phi_2$ satisfy pointwise IQCs defined by $(Q_{\phi_1},x_{\star},y_{\star})$ and $(Q_{\phi_2},y_{\star},z_{\star})$, respectively. By definition, these mappings impose the following quadratic constraints on the pairs $(x,y)$ and $(y,z)$:
\begin{align*}
\begin{bmatrix}
x-x_{\star} \\ y-y_{\star}
\end{bmatrix}^\top  Q_{\phi_1} \, \begin{bmatrix}
x-x_{\star} \\ y-y_{\star}
\end{bmatrix}  \geq 0, \quad \begin{bmatrix}
y-y_{\star} \\ z-z_{\star}
\end{bmatrix}^\top  Q_{\phi_2} \, \begin{bmatrix}
y-y_{\star} \\ z-z_{\star}
\end{bmatrix}  \geq 0.
\end{align*}
These two constraints separately define a quadratic constraint on the triple $(x,y,z)$, which can be encapsulated in a single constraint, as follows:
\begin{subequations} \label{eq: constraint on (x,y,z)}
	\begin{align} \label{eq: constraint on (x,y,z) 1}
	\begin{bmatrix}
	x - x_{\star} \\ y-y_{\star} \\ z-z_{\star}
	\end{bmatrix}^\top Q_{\psi} \begin{bmatrix}
	x - x_{\star} \\ y-y_{\star} \\ z-z_{\star}
	\end{bmatrix} \geq 0,
	\end{align}
	where $Q_{\psi} \in \mathbb{S}^{3d}$ is given by
	\begin{align} \label{eq: constraint on (x,y,z) 2}
	Q_{\psi} = \begin{bmatrix}
	I_d & 0 \\ 0 & I_d \\ 0 & 0
	\end{bmatrix} \sigma_1 Q_{\phi_1} \begin{bmatrix}
	I_d & 0 &0 \\ 0 & I_d & 0
	\end{bmatrix}  + \begin{bmatrix}
	0 & 0 \\ I_d & 0 \\ 0 & I_d
	\end{bmatrix}\sigma_2 Q_{\phi_2} \begin{bmatrix}
	0 & I_d &0 \\ 0 & 0 & I_d
	\end{bmatrix},
	\end{align}
\end{subequations}
with $\sigma_1,\sigma_2 \geq 0$. The quadratic constraint in \eqref{eq: constraint on (x,y,z) 1} follows by substituting \eqref{eq: constraint on (x,y,z) 2}  into \eqref{eq: constraint on (x,y,z) 1}. In the language of IQCs, we can say that the map $\psi = [\phi_1^\top \ (\phi_2 \circ \phi_1)^\top]^\top$ $\colon \mathbb{R}^d \to \mathbb{R}^{2d}$ satisfies the pointwise IQC defined by $(Q_{\psi},x_{\star},\psi(x_{\star}))$, where $Q_{\psi}$ is given by \eqref{eq: constraint on (x,y,z) 2}.
\begin{figure}[htbp]
	\centering
	\includegraphics[width=0.6\textwidth]{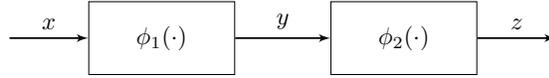}
	\caption{Cascade connection of two nonlinear mappings.}
	\label{fig: composition}
\end{figure}

We remark that the above treatment can be extended to multiple compositions. Specifically, for $\ell$ mappings in a cascade connection, the corresponding $\ell$ individual IQCs can be grouped into a single quadratic constraint on the concatenated vector of the input-output signals.

\subsubsection{Proximal operators} \label{subsection: IQCs for Proximal Operators}
Recall the definition of proximal operator for $f: \mathbb{R}^d \to \mathbb{R} \cup \{+\infty\}$:
\begin{align} \label{eq: proximal operator}
\Pi_{f,h}(x)=\mathrm{argmin}_{y \in \mathbb{R}^d} \{f(y) + \dfrac{1}{2h} \|y-x\|_2^2\}.
\end{align}
To characterize $\Pi_{f,h}$ from an IQC perspective, we note that for any given $x \in \dom \, f$, a necessary condition for optimality in \eqref{eq: proximal operator} is that
\begin{align}\label{eq: proximal operator optimality condition}
0 \in \partial f(\Pi_{g,h}(x)) + \dfrac{1}{h} (\Pi_{f,h}(x)-x), \ \mbox{for all }  x \in \dom \, f,
\end{align}
which is an implicit equation on $\Pi_{f,h}(x)$. In the next proposition, we show how to obtain a quadratic constraint for the proximal operator $\Pi_{f,h}$ from that of the subgradient $T_f$ by using the necessary optimality condition \eqref{eq: proximal operator optimality condition} that couples these two operators.

%
\begin{proposition}[IQCs for proximal operators] \label{prop: IQC For Proximal Operators}
	Let $f \colon \mathbb{R}^d \to \mathbb{R} \cup \{+\infty\}$ be a closed proper function whose subgradient $T_f$ satisfies the pointwise IQC defined by $(Q_f,x_{\star},T_f(x_{\star}))$, where $T_f(x_{\star}) \in \partial f(x_{\star})$. Then, the proximal operator $\Pi_{hf}$ satisfies the pointwise IQC defined by $(Q_{\Pi_{hf}},x_{\star},\Pi_{h f}(x_{\star}))$, where 
	\begin{align}\label{prop: IQC For Proximal Operators 1}
	Q_{\Pi_{hf}} = \begin{bmatrix}
	0  &  h^{-1}I_d \\ I_d & -h^{-1}I_d
	\end{bmatrix}Q_{f} \begin{bmatrix}
	0  &  I_d \\ h^{-1}I_d & -h^{-1}I_d
	\end{bmatrix}.
	\end{align}
\end{proposition}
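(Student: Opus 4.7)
The plan is to exploit the first-order optimality condition \eqref{eq: proximal operator optimality condition} for the proximal operator, which implicitly couples $\Pi_{hf}$ to a particular selection of subgradient. Concretely, for any input $x$, setting $y = \Pi_{hf}(x)$ the optimality condition picks out the subgradient
$$T_f(\Pi_{hf}(x)) = \tfrac{1}{h}\bigl(x - \Pi_{hf}(x)\bigr),$$
and, by the same token, $T_f(\Pi_{hf}(x_\star)) = \tfrac{1}{h}(x_\star - \Pi_{hf}(x_\star))$ at the reference input. This is the only place where the defining property of the proximal map enters; the rest of the argument is an algebraic change of variables.

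Next, I would invoke the assumed pointwise IQC for $T_f$, evaluated at the pair $(\Pi_{hf}(x),\,\Pi_{hf}(x_\star))$:
$$\begin{bmatrix} \Pi_{hf}(x) - \Pi_{hf}(x_\star) \\ T_f(\Pi_{hf}(x)) - T_f(\Pi_{hf}(x_\star)) \end{bmatrix}^\top Q_f \begin{bmatrix} \Pi_{hf}(x) - \Pi_{hf}(x_\star) \\ T_f(\Pi_{hf}(x)) - T_f(\Pi_{hf}(x_\star)) \end{bmatrix} \geq 0.$$
Substituting the expressions derived in the previous step for the two subgradients, the inner vector becomes a quadratic form in the pair $(x - x_\star,\ \Pi_{hf}(x) - \Pi_{hf}(x_\star))$ alone, which is precisely the pair of differences that appears in the target IQC for $\Pi_{hf}$.

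The final step is to recognize this substitution as a single linear transformation,
$$\begin{bmatrix} \Pi_{hf}(x) - \Pi_{hf}(x_\star) \\ T_f(\Pi_{hf}(x)) - T_f(\Pi_{hf}(x_\star)) \end{bmatrix} = \begin{bmatrix} 0 & I_d \\ h^{-1}I_d & -h^{-1}I_d \end{bmatrix} \begin{bmatrix} x - x_\star \\ \Pi_{hf}(x) - \Pi_{hf}(x_\star) \end{bmatrix},$$
so that the inequality rearranges into the form $[\cdots]^\top T^\top Q_f T\,[\cdots] \geq 0$ with $T$ the above $2\times 2$ block matrix. Since $T^\top$ equals the left factor in \eqref{prop: IQC For Proximal Operators 1}, this is exactly the claimed $Q_{\Pi_{hf}}$. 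Equivalently, one can bypass the explicit substitution and invoke Lemma \ref{eq: IQC affine transformation} directly, viewing the composition $x \mapsto (\Pi_{hf}(x), T_f(\Pi_{hf}(x)))$ as an affine image of $(x, \Pi_{hf}(x))$.

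No significant obstacle is anticipated. The only subtle point is that the IQC for $T_f$ is being invoked at the reference pair $(\Pi_{hf}(x_\star),\,T_f(\Pi_{hf}(x_\star)))$ rather than at the stated $(x_\star, T_f(x_\star))$; this is harmless because the natural IQCs for subdifferentials of CCP functions (monotonicity, strong convexity plus Lipschitz continuity, etc.) are \emph{pointwise} and hold with respect to any reference point in $\mathrm{gra}\,\partial f$, and the selection produced by the prox optimality condition is a valid such reference.
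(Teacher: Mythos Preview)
Your proposal is correct and follows essentially the same route as the paper: substitute the prox optimality condition $T_f(\Pi_{hf}(x)) = h^{-1}(x - \Pi_{hf}(x))$ into the IQC for $T_f$ evaluated at $(\Pi_{hf}(x),\Pi_{hf}(x_\star))$, then recognize the result as a congruence by the block matrix $T$ you wrote down. Your closing remark about the reference point shifting to $(\Pi_{hf}(x_\star),T_f(\Pi_{hf}(x_\star)))$ is a valid caveat that the paper's proof also silently assumes.
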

\begin{proof}
	Suppose $T_f(x) \in \partial f(x)$ ($T_f(x)=\nabla f(x)$ when $f$ is differentiable) satisfies the pointwise IQC defined by $(Q_f,x_{\star},T_f(x_{\star}))$. 
	By the substitution $x \leftarrow \Pi_{hf}(x)$ and $x_{\star} \leftarrow \Pi_{hf}(x_{\star})$ in \eqref{eq: IQC}, we obtain
	\begin{align} \label{prop: IQC For Proximal Operators 2}
	\begin{bmatrix}
	\Pi_{hf}(x)-\Pi_{hf}(x_{\star}) \\ T_f(\Pi_{hf}(x))-T_f(\Pi_{hf}(x_{\star}))
	\end{bmatrix}^\top Q_f \, \begin{bmatrix}
	\Pi_{hf}(x)-\Pi_{hf}(x_{\star}) \\ T_f(\Pi_{hf}(x))-T_f(\Pi_{hf}(x_{\star}))
	\end{bmatrix} \geq 0.
	\end{align}
	On the other hand, by the optimality condition \eqref{eq: proximal operator optimality condition}, we have $T_f(\Pi_{h f}(x))=\frac{1}{h}(x-\Pi_{h f}(x))$. Substituting this into \eqref{prop: IQC For Proximal Operators 2}, we obtain
	\begin{align} \label{prop: IQC For Proximal Operators 3}
	\begin{bmatrix}
	\Pi_{hf}(x)-\Pi_{hf}(x_{\star}) \\     \dfrac{1}{h} (x - x_{\star})-\dfrac{1}{h} (\Pi_{hf}(x)- \Pi_{hf}(x_{\star})) 
	\end{bmatrix}^\top \! Q_f \! \begin{bmatrix}
	\Pi_{hf}(x)-\Pi_{hf}(x_{\star}) \\ \dfrac{1}{h} (x - x_{\star})-\dfrac{1}{h} (\Pi_{hf}(x)- \Pi_{hf}(x_{\star})) 
	\end{bmatrix} \geq 0.
	\end{align}
	Further, we can write
	\begin{align} \label{prop: IQC For Proximal Operators 4}
	\begin{bmatrix}
	\Pi_{hf}(x)-\Pi_{hf}(x_{\star}) \\     \dfrac{1}{h} (x - x_{\star})-\dfrac{1}{h} (\Pi_{hf}(x)- \Pi_{hf}(x_{\star})) 
	\end{bmatrix} = \begin{bmatrix}
	0  &  I_d \\ \dfrac{1}{h}I_d & -\dfrac{1}{h}I_d
	\end{bmatrix} \begin{bmatrix}
	x-x_{\star} \\ \Pi_{hf}(x)-\Pi_{hf}(x_{\star})
	\end{bmatrix}.
	\end{align}
	By substituting \eqref{prop: IQC For Proximal Operators 4} in \eqref{prop: IQC For Proximal Operators 3}, we will arrive at the desired inequality in \eqref{prop: IQC For Proximal Operators 1}.
\end{proof}
Notice that by \eqref{eq: proximal operator optimality condition}, we have that $\Pi_{h f} = (I+h\partial f)^{-1}$. In other words, the proximal operator is obtained by the operations $\partial f \to I+h \partial f \to (I+h \partial f)^{-1}$, i.e.,  an affine operation on $\partial f$ followed by an inversion. Therefore, for obtaining the IQC of $\Pi_{h f}$ from that of $\partial f$, we can directly use Lemma \ref{eq: IQC inversion} and \ref{eq: IQC affine transformation} to arrive at an alternative derivation of \eqref{prop: IQC For Proximal Operators 1}. 
\subsubsection{IQCs for projection operators} \label{subsection: IQC for projection operators}The projection operator is the proximal operator $\Pi_{h f}$ for the particular selection $f(x)=\mathbb{I}_{\mathcal{X}}(x)$, where $\mathbb{I}_{\mathcal{X}}$ is the extended-value indicator function of the nonempty closed convex set $\mathcal{X} \subset \mathbb{R}^d$ onto which we project. Since $f$ is nondifferentiable and convex in this case, its subgradient operator $T_f$ satisfies the pointwise IQC defined by $(Q_f,x_{\star},T_f(x_{\star}))$, where $Q_f$ is given by \eqref{eq: strongly convex IQC} with $L_f=\infty$. It then follows from Proposition \ref{prop: IQC For Proximal Operators} that the projection operator $\Pi_{\mathcal{X}}$ satisfies the IQC defined by $(Q_{\Pi_{\mathcal{X}}},x_{\star},\Pi_{\mathcal{X}}(x_{\star}))$, where
\begin{align}
Q_{\Pi_{\mathcal{X}}} = \begin{bmatrix}
0 & \dfrac{1}{2} \\ \dfrac{1}{2} & -1
\end{bmatrix} \otimes I_d.
\end{align}
This IQC corresponds to the firm nonexpansiveness property of the projection operator \cite{combettes2011proximal}, which implies the Lipschitz continuity of $\Pi_{\mathcal{X}}$ with Lipschitz parameter equal to one. 

%
%

\subsection{Beyond convexity} 
The convergence analysis of several algorithms do not make a full use of convexity. In other words, convexity is sufficient for convergence of these algorithms. This has motivated the introduction of function classes that are relaxation of convexity. In this subsection, we briefly discuss some of these classes and how they can be related to the developed framework in this paper. Formally, consider a continuously differentiable function $f \colon$ $\mathbb{R}^d\to \mathbb{R}$ that satisfies the following bounds.
\begin{align} \label{eq: beyond convexity}
\begin{bmatrix}
x-x_{\star} \\ \nabla f(x)
\end{bmatrix} ^\top {R}'_{f} \begin{bmatrix}
x-x_{\star} \\ \nabla f(x)
\end{bmatrix} \leq f(x)-f(x_{\star}) \leq 	\begin{bmatrix}
x-x_{\star} \\ \nabla f(x)
\end{bmatrix} ^\top R_{f} \begin{bmatrix}
x-x_{\star} \\ \nabla f(x)
\end{bmatrix},
\end{align}
where $R_f,R'_f \in \mathbb{S}^{2d}$ are symmetric matrices and $x_{\star}$ is such that $\nabla f(x_{\star})=0$. It follows from \eqref{eq: beyond convexity} that 
\begin{align} \label{eq: beyond convexity IQC}
\begin{bmatrix}
x-x_{\star} \\ \nabla f(x)
\end{bmatrix}^\top (R_f-R'_f) \begin{bmatrix}
x-x_{\star} \\ \nabla f(x)
\end{bmatrix} \geq 0.
\end{align}
Note that since $\nabla f(x_{\star})=0$, the above inequality implies that $\nabla f$ satisfies the pointwise IQC defined by $(R_f-R'_f,x_{\star},\nabla f(x_{\star}))$. 
Several function classes can be written in the form \eqref{eq: beyond convexity}, where $R_f$ and $R'_f$ differ for each class. We give three examples below.

\medskip

\textit{(Strongly) convex functions.} In $\S$\ref{sec: IQCs for (strongly) convex functions}, we considered IQCs for convex functions. Specifically, the quadratic inequality \eqref{eq: strongly convex lipschitz 2} is necessary and sufficient for the inclusion $f \in \mathcal{F}(m_f,L_f)$. An equivalent inequality involving function values is \cite{taylor2017smooth}\footnote{Note that, by adding both sides of \eqref{eq: convex interpolation} to the inequality obtained by interchanging $x$ and $y$ in \eqref{eq: convex interpolation}, we obtain \eqref{eq: strongly convex lipschitz 2}.} 
\begin{align} \label{eq: convex interpolation}
&f(y)\!-\!f(x)\!-\!\nabla f(x)^\top(y\!-\!x)	\geq \frac{1}{2(L_f\!-\!m_f)} \|\nabla f(y)\!-\!\nabla f(x)\|_2^2 \\ & \qquad \qquad \qquad \quad+\frac{m_fL_f}{2(L_f\!-\!m_f)} \|y\!-\!x\|_2^2\!-\!\dfrac{m_f}{L_f\!-\!m_f}(\nabla f(y)\!-\!\nabla f(x))^\top (y-x), \nonumber
\end{align}
If we restrict \eqref{eq: convex interpolation} to hold only for the particular selection $(x,y)=(x_\star,x)$ and $(x,y)=(x,x_{\star})$, we obtain a new class of functions that can be put in the form \eqref{eq: beyond convexity} with $R'_f, R_f $ given by
\begin{align} \label{eq: nonstrongly convex class 1}
{R}'_f &= \begin{bmatrix}
\frac{m_fL_f}{2(L_f-m_f)} & \frac{-m_f}{2(L_f-m_f)}\\  \frac{-m_f}{2(L_f-m_f)} & \frac{1}{2(L_f-m_f)}
\end{bmatrix}\otimes I_d, \quad {R}_f = \begin{bmatrix}
\frac{-m_fL_f}{2(L_f-m_f)} & \frac{L_f}{2(L_f-m_f)}\\  \frac{L_f}{2(L_f-m_f)} & \frac{-1}{2(L_f-m_f)}
\end{bmatrix} \otimes I_d,
\end{align}
Using \eqref{eq: beyond convexity IQC}, we can conclude
\begin{align}
\begin{bmatrix}
x-x_{\star} \\ \nabla f(x)
\end{bmatrix}^\top \begin{bmatrix} -\frac{m_f L_f}{m_f+L_f} I_d & \frac{1}{2} I_d \\ \frac{1}{2} I_d & -\frac{1}{m_f+L_f} I_d \end{bmatrix} \begin{bmatrix}
x-x_{\star} \\ \nabla f(x)
\end{bmatrix} \geq 0.
\end{align}
Note that this quadratic inequality is the same as that of convex functions but only holds when the reference point $x_{\star}$ in the definition of pointwise IQC satisfies $\nabla f(x_{\star})=0$.

\medskip
\textit{Weakly smooth weakly quasiconvex functions.} Suppose $f$ is continuously differentiable and satisfies \cite{hardt2016gradient}:
%
\begin{align} \label{eq: QWC}
\dfrac{1}{\Gamma_f} \|\nabla f(x)\|_2^2\leq f(x)-f(x_{\star}) \leq \dfrac{1}{\tau_f}\nabla f(x)^\top (x-x_{\star}) \quad \text{for all} \ x \in \mathcal{S},
\end{align}
where $x_{\star}$ is a global minimum of $f$, and $0 < \tau_f,\Gamma_f < \infty$. These inequalities ensure that any point with vanishing gradient is optimal \cite{hardt2016gradient}, i.e., $\nabla f(x_{\star})=0$. The inequality \eqref{eq: QWC} can be put in the form \eqref{eq: beyond convexity}, where $R'_f, R_f,$ and $Q_f$ are given by
\begin{align} \label{eq: nonstrongly convex class 2}
{R}'_f = \begin{bmatrix}
0 & 0 \\  0 & \frac{1}{\Gamma_f}
\end{bmatrix}\otimes I_d, \ \ 	{R}_f = \begin{bmatrix}
0 & \frac{1}{2\tau_f}\\  \frac{1}{2\tau_f} & 0
\end{bmatrix} \otimes I_d, \ \ Q_f = \begin{bmatrix}
0  &  \frac{1}{2\tau_f} \\ \frac{1}{2\tau_f} & -\frac{1}{\Gamma_f}
\end{bmatrix}  \otimes I_d.
\end{align}

\medskip

\emph{Polyak-$\L{}$ojasiewicz (PL) condition.} Suppose $f$ is continuously differentiable and satisfies 
        \begin{align} \label{eq: PL 1}
        0 \leq f(x) - f(x_{\star}) \leq \dfrac{1}{2m_f} \|\nabla f(x)\|_2^2 \quad \text{for all} \ x \in \mathcal{S},
        \end{align}
        for some $m_f>0$. Again, this class can be put in the form \eqref{eq: beyond convexity}. 
%

\subsection{Continuous-time models} \label{Section: Continuous-Time Models}
There is a close connection between iterative algorithms and discretization of ordinary differential equations (ODE). In fact, many iterative first-order optimization algorithms reduce to their ``generative" ODEs by time-scaling and infinitesimal step sizes. 
In this subsection, we consider convergence analysis of continuous-time models for solving the unconstrained problem in \eqref{eq: unconstrained optimization}. Specifically, consider the following continuous-time dynamical system in state-space form:
\begin{align} \label{eq: continuous time dynamical system gradient}
\dot{\xi}(t) = A(t) \xi(t)+ B(t) u(t), \ \  y(t) = C(t)\xi(t), \ \ u(t) = \nabla f(y(t)) \ \ \mbox{for all } t\geq t_0,
\end{align}
where at each continuous time $t \geq t_0$, $\xi(t) \in \mathbb{R}^n$ is the state, $y(t) \in \mathbb{R}^d$ is the output ($d \leq n$), and $u(t)= \nabla f(y(t))$ is the feedback input. We assume \eqref{eq: continuous time dynamical system gradient} solves \eqref{eq: unconstrained optimization} asymptotically from all admissible initial conditions, i.e., $y(t)$ satisfies $\lim_{t \to \infty} f(y(t)) = f(y_\star)$, where the optimal point $y_\star$ obeys $\nabla f(y_{\star})=0$. Therefore, any fixed point of \eqref{eq: continuous time dynamical system gradient} satisfies
\begin{align} \label{eq: continuous time dynamics fixed points}
0 = A(t) \xi_{\star}, \quad y_\star = C(t) \xi_{\star}, \quad u_{\star}  = \nabla f(y_\star) = 0 \quad \mbox{for all } t\geq t_0.
\end{align}
We replicate the convergence analysis of discrete-time models using the Lyapunov function
\begin{align} \label{eq: Lyapunov gradient cont}
V(\xi(t),t) = a(t) (f(y(t))-f(y_\star)) + (\xi(t)-\xi_\star)^\top P(t)  (\xi(t)-\xi_\star),
\end{align}
where $(\xi(t),y(t))$ satisfies \eqref{eq: continuous time dynamical system gradient} and $(\xi_{\star},y_{\star})$ satisfies \eqref{eq: continuous time dynamics fixed points}. The Lyapunov function is parameterized by $P(t) \in \mathbb{S}_{+}^n$, as well as $a(t) \geq 0$.
If $a(t)$ and $P(t)$ are such that $\dot{V}(\xi(t),t) \leq 0$, then we could guarantee that $V(\xi(t),t) \leq V(\xi(t_0),t_0)$, which in turn implies
\begin{align} \label{eq: convergence rate continuous}
0 \leq f(y(t))-f(y_{\star})\leq V(\xi(t_0),t_0)/a(t) = \mathcal{O}(1/a(t)) \quad \mbox{for all } t\geq t_0.
\end{align}
In other words, $a(t)$ provides a lower bound on the convergence rate. Ideally, we are interested in finding the best bound, which translates into the fastest growing $a(t)$. In the following theorem, we develop an LMI to find such an $a(t)$.
%
%
%
\begin{theorem}\label{thm: Continuous-Time Gradient Descent}

Let $f \in \mathcal{F}(m_f,L_f)$ and consider the continuous-time dynamics in \eqref{eq: continuous time dynamical system gradient}, whose fixed points satisfy \eqref{eq: continuous time dynamics fixed points}. Suppose there exist a differentiable nondecreasing $a(t) \colon [t_0,\infty) \to \mathbb{R}_{+}$, a differentiable $P(t) \colon [t_0,\infty) \to \mathbb{S}_{+}^n$, and a continuous $\sigma(t) \colon [t_0,\infty) \to \mathbb{R}_{+}$ that satisfy
		\begin{align} \label{thm: Continuous-Time Gradient Descent 11}
		M_0(t) + a(t) M_1(t) + \dot{a}(t) M_2(t)+ \sigma(t) M_3(t)\preceq 0 \quad \mbox{for all } t \geq t_0,
		\end{align}
		where
		\begin{align*}
		M_0(t) &= \begin{bmatrix} P(t)A(t)\!+\!A(t)^\top P(t)\!+\!\dot{P}(t) & P(t)B(t) \\  B(t)^\top P(t)& 0 \end{bmatrix}, \\
		M_1(t) &= \dfrac{1}{2}\!\begin{bmatrix} 0 & (C(t)A(t)+\dot{C}(t))^\top \\  C(t)A(t)+\dot{C}(t)&  C(t)B(t)+B(t)^\top C(t)^\top \end{bmatrix}, \nonumber \\
		M_2(t) & =  \begin{bmatrix} C(t)^\top & 0 \\ 0 & I_d \end{bmatrix} \begin{bmatrix}
			-\frac{m_f}{2} I_d & \frac{1}{2} I_d \\ \frac{1}{2} I_d & 0
		\end{bmatrix}\begin{bmatrix}C(t) & 0 \\ 0 & I_d \end{bmatrix}, \nonumber \\
		M_3(t) & = \begin{bmatrix} C(t)^\top & 0 \\ 0 & I_d \end{bmatrix} \begin{bmatrix}
			-\frac{m_f L_f}{m_f + L_f}I_d & \frac{1}{2} I_d \\ \frac{1}{2} I_d & -\frac{1}{m_f+L_f}I_d
		\end{bmatrix}\begin{bmatrix}C(t) & 0 \\ 0 & I_d \end{bmatrix}, \nonumber
		\end{align*}
	%
	Then, for any $y(t_0) \in \dom \, f$, the following inequality holds for all $t \geq t_0$.
	\begin{align} \label{thm: Continuous-Time Gradient Descent 2}
	f(y(t))-f(y_{\star}) \leq \dfrac{a(t_0)(f(y(t_0))\!-\!f(y_{\star}))+(\xi(t_0)\!-\!\xi_\star)^\top P(t_0)  (\xi(t_0)\!-\!\xi_\star)}{a(t)}
	\end{align}
\end{theorem}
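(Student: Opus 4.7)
The plan is to mirror the discrete-time proof of Theorem \ref{thm: main result}, replacing the one-step difference $V_{k+1}-V_k$ by the time derivative $\dot V(\xi(t),t)$, and showing that the LMI \eqref{thm: Continuous-Time Gradient Descent 11} forces $\dot V \le 0$. Integrating this in time then yields the rate bound \eqref{thm: Continuous-Time Gradient Descent 2} after dividing by $a(t)$, exactly as in the discrete case.

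First, using \eqref{eq: continuous time dynamics fixed points} I would rewrite the dynamics in error form: since $A(t)\xi_\star=0$ and $u_\star=\nabla f(y_\star)=0$, we have $\dot\xi(t)=A(t)(\xi(t)-\xi_\star)+B(t)(u(t)-u_\star)$. Similarly, differentiating $y(t)=C(t)\xi(t)$ and using $\dot C(t)\xi_\star=0$ (which follows from the assumption that $y_\star$ is a fixed point for all $t$, forcing $\dot y_\star=0$), I get $\dot y(t)=(C(t)A(t)+\dot C(t))(\xi(t)-\xi_\star)+C(t)B(t)(u(t)-u_\star)$. Defining $e(t)=[(\xi(t)-\xi_\star)^\top\ (u(t)-u_\star)^\top]^\top$, differentiating the quadratic term of $V$ yields
\begin{align*}
\frac{d}{dt}\bigl[(\xi-\xi_\star)^\top P(\xi-\xi_\star)\bigr]
&= (\xi-\xi_\star)^\top\bigl(PA+A^\top P+\dot P\bigr)(\xi-\xi_\star)+2(\xi-\xi_\star)^\top PB(u-u_\star) \\
&= e^\top M_0(t)\,e,
\end{align*}
which identifies the $M_0(t)$ block.

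Next, for the scaled objective term I would expand
\[
\frac{d}{dt}\bigl[a(t)(f(y(t))-f(y_\star))\bigr]=\dot a(t)(f(y(t))-f(y_\star))+a(t)\,\nabla f(y(t))^\top\dot y(t).
\]
Substituting $\nabla f(y(t))=u(t)=u(t)-u_\star$ and the expression for $\dot y(t)$, a direct computation shows $a(t)\,u(t)^\top\dot y(t)=a(t)\,e^\top M_1(t)\,e$, giving the $M_1$ block. For the remaining term, strong convexity of $f\in\mathcal{F}(m_f,L_f)$ applied at the pair $(y,y_\star)$, together with $\nabla f(y_\star)=0$ and $y-y_\star=C(t)(\xi-\xi_\star)$, gives
\[
f(y(t))-f(y_\star)\;\le\;e^\top M_2(t)\,e,
\]
so multiplying by $\dot a(t)\ge 0$ produces the $\dot a\,M_2$ block. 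Finally, Lemma \ref{prop: Quadratic bounds for convex functions} (or equivalently the IQC \eqref{eq: strongly convex IQC} applied to the pair $(y(t),y_\star)$) yields $e^\top M_3(t)\,e\ge 0$.

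Collecting these pieces gives
\[
\dot V(\xi(t),t)\;\le\;e^\top\!\bigl(M_0+aM_1+\dot a\,M_2\bigr)e\;=\;e^\top\!\bigl(M_0+aM_1+\dot a\,M_2+\sigma M_3\bigr)e-\sigma(t)\,e^\top M_3(t)\,e.
\]
Invoking the hypothesized LMI \eqref{thm: Continuous-Time Gradient Descent 11} makes the first bracketed term nonpositive, and the subtracted term is also nonpositive since $\sigma(t)\ge 0$ and $e^\top M_3 e\ge 0$. Hence $\dot V(\xi(t),t)\le 0$ for all $t\ge t_0$, so $V(\xi(t),t)\le V(\xi(t_0),t_0)$. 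Because $P(t)\succeq 0$, dropping the quadratic part and dividing by $a(t)$ yields \eqref{thm: Continuous-Time Gradient Descent 2}.

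The main technical subtlety is the treatment of the time-varying output map $C(t)$: one needs the cancellation $\dot C(t)\xi_\star=0$ so that $y_\star$ is genuinely a fixed point and $(u-u_\star)^\top\dot y$ can be expressed purely in terms of $e(t)$. Once this is in place, the computation reduces to the same quadratic-form bookkeeping used in the discrete-time proof, with $\dot a,\dot P$ replacing the forward differences $a_{k+1}-a_k,P_{k+1}-P_k$.
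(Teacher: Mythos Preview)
Your proposal is correct and follows essentially the same route as the paper's own proof: differentiate the Lyapunov function \eqref{eq: Lyapunov gradient cont}, identify the quadratic term as $e^\top M_0 e$, the chain-rule term $a\,\nabla f(y)^\top\dot y$ as $a\,e^\top M_1 e$, bound $\dot a\,(f(y)-f(y_\star))$ by $\dot a\,e^\top M_2 e$ via strong convexity, use the IQC \eqref{eq: strongly convex IQC} for $e^\top M_3 e\ge 0$, and then apply the LMI to conclude $\dot V\le 0$. Your explicit observation that $\dot C(t)\xi_\star=0$ (which follows from differentiating the constant $y_\star=C(t)\xi_\star$) is the only point where you are slightly more careful than the paper, which uses this identity implicitly when writing $\dot y$ in terms of $e$.
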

\begin{proof}
	It suffices to show that the LMI condition in \eqref{thm: Continuous-Time Gradient Descent 11} implies $\dot{V}(\xi(t),t) \leq 0$.  The time derivative of the Lyapunov function \eqref{eq: Lyapunov gradient cont} is
	\begin{align} \label{thm: Continuous-Time Gradient Descent 3}
	\dot{V} = \dot{a} &(f(y)-f(y_{\star}))+a\nabla f(y)^\top \dot{y}\! +\!2(\xi-\xi_{\star})^\top P \dot \xi+(\xi-\xi_{\star})^\top \dot{P} (\xi-\xi_{\star}). 
	\end{align}
	We have dropped the arguments for notational simplicity. We proceed to bound all the terms in the right-hand side of \eqref{thm: Continuous-Time Gradient Descent 3}, using the assumption $f \in \mathcal{F}(m_f,L_f)$. By invoking (strong) convexity, we can write
	\begin{align} \label{thm: Continuous-Time Gradient Descent 4}
	f(y) \!-\! f(y_\star) &\leq  \begin{bmatrix}
	y - y_\star \\ \nabla f(y) - \nabla f(y_\star)
	\end{bmatrix}^\top \begin{bmatrix}
	-\frac{m_f}{2} I_d & \frac{1}{2}I_d \\ \frac{1}{2}I_d & 0
	\end{bmatrix}  \, \begin{bmatrix}
	y - y_\star \\ \nabla f(y) - \nabla f(y_\star)
	\end{bmatrix}  
	\\
	%
	%
	& = 
	\begin{bmatrix}
	\xi-\xi_{\star} \\ u-u_{\star}
	\end{bmatrix}^\top \begin{bmatrix}
	C& 0 \\ 0 & I_d
	\end{bmatrix}^\top \begin{bmatrix}
	-\frac{m_f}{2} I_d & \frac{1}{2}I_d \\ \frac{1}{2}I_d & 0
	\end{bmatrix} \begin{bmatrix}
	C & 0 \\ 0 & I_d
	\end{bmatrix} \,  \begin{bmatrix}
	\xi-\xi_{\star} \\ u-u_{\star}
	\end{bmatrix}. \nonumber \\
	&= e^\top M_2 e. \nonumber 
	\end{align}
	where we have defined $e=\begin{bmatrix} (\xi-\xi_{\star})^\top & (u-u_{\star})^\top \end{bmatrix}$. Further, we can write
	\begin{align} \label{thm: Continuous-Time Gradient Descent 44}
	&\nabla f(y)^\top \dot{y}= (u-u_{\star})^\top (CA(\xi-\xi_{\star}) + CB(u-u_{\star})+\dot{C}(\xi-\xi_{\star}))  \\
	&= \begin{bmatrix}
	\xi -\xi_{\star} \\ u -u_{\star}
	\end{bmatrix}^\top \begin{bmatrix}
	0 & \frac{1}{2} (C A +\dot C )^\top \\  \frac{1}{2}(C A +\dot C )&  \frac{1}{2} (C B +B ^\top C ^\top)
	\end{bmatrix} \begin{bmatrix}
	\xi -\xi_{\star} \\ u -u_{\star}
	\end{bmatrix}\nonumber \\
	& = e ^\top M_1  e . \nonumber
 	\end{align}
	where we have used \eqref{eq: continuous time dynamical system gradient} and \eqref{eq: continuous time dynamics fixed points}. Similarly, we can write
	\begin{align} \label{thm: Continuous-Time Gradient Descent 444}
	2(\xi -\xi_{\star})^\top P  \dot \xi  &= \begin{bmatrix}
	\xi -\xi_{\star} \\ u -u_{\star}
	\end{bmatrix}^\top
	\begin{bmatrix}
	P A +A ^\top P  & P B  \\  B ^\top P ^\top& 0
	\end{bmatrix} \begin{bmatrix}
	\xi -\xi_{\star} \\ u -u_{\star}
	\end{bmatrix}= e ^\top M_0  e . 
	\end{align}
	Finally, since $f \in \mathcal{F}(m_f,L_f)$, $\nabla f$ satisfies the quadratic constraint in \eqref{eq: strongly convex IQC}. Therefore, we can write
	\begin{align} \label{thm: Continuous-Time Gradient Descent 5}
	e ^\top M_3  e  &=\begin{bmatrix}
	\xi -\xi_{\star} \\ u -u_{\star}
	\end{bmatrix}^\top \begin{bmatrix} C  & 0 \\ 0 & I_d \end{bmatrix}^\top \begin{bmatrix}
	-\frac{m_f L_f}{m_f + L_f}I_d & \frac{1}{2} I_d \\ \frac{1}{2} I_d & -\frac{1}{m_f+L_f}I_d
\end{bmatrix} \begin{bmatrix}C  & 0 \\ 0 & I_d \end{bmatrix}  \, \begin{bmatrix}
	\xi -\xi_{\star} \\ u -u_{\star}
	\end{bmatrix} \\ &= \!\begin{bmatrix}
	y  - y_\star \\ u  - u_{\star}
	\end{bmatrix}^\top \begin{bmatrix}
	-\frac{m_f L_f}{m_f + L_f}I_d & \frac{1}{2} I_d \\ \frac{1}{2} I_d & -\frac{1}{m_f+L_f}I_d
\end{bmatrix} \begin{bmatrix}
	y  - y_\star \\ u  - u_{\star}
	\end{bmatrix} \geq 0. \nonumber 
	\end{align}
	By substituting \eqref{thm: Continuous-Time Gradient Descent 4}-\eqref{thm: Continuous-Time Gradient Descent 444} in \eqref{thm: Continuous-Time Gradient Descent 3} and rearranging terms, we can write
	\begin{align}  \label{thm: Continuous-Time Gradient Descent 6}
	\dot{V} & \leq e ^\top \left(M_0 + a M_1 + \dot{a} M_2  \right) e  
	\end{align}
	The LMI in \eqref{thm: Continuous-Time Gradient Descent 11} implies
	\begin{align}\label{thm: Continuous-Time Gradient Descent 7}
	M_0 + a M_1 + \dot{a} M_2  \preceq -\sigma  M_3 
	\end{align}
	Multiplying \eqref{thm: Continuous-Time Gradient Descent 7} on the left and right by $e ^\top$ and $e $, respectively, and substituting the result back in \eqref{thm: Continuous-Time Gradient Descent 6} yields
	\begin{align*}
	\dot{V} & \leq -\sigma  e ^\top M_3  e  \leq 0,
	\end{align*}
	where the second inequality follows from \eqref{thm: Continuous-Time Gradient Descent 5}. The proof is now complete.
\end{proof}

According to Theorem \ref{thm: Continuous-Time Gradient Descent}, we can find the rate generating function $a(t)$ by solving the LMI in \eqref{thm: Continuous-Time Gradient Descent 11}. More precisely, this LMI defines a first-order differential inequality on $a(t)$ whose solutions certify an $\mathcal{O}(1/a(t))$ convergence rate. 
The best lower bound on the convergence rate (i.e., the fastest growing $a(t)$) can be found by solving the following symbolic optimization problem:
\begin{align} \label{eq: SDP gradient cont}
\underset{\dot{a}(t) \geq 0,\sigma(t) \geq 0}{\mbox{maximize}} \ \dot{a}(t) \ \mbox{subject to} \ \dot{a}(t) M_0(t) + a(t) M_1(t) + M_2(t)+\sigma(t) M_3(t) \preceq 0,
\end{align} 
The optimality condition for \eqref{eq: SDP gradient cont} translates into a first-order differential equation (ODE) on $a(t)$. The solution to this ODE yields the best rate bound that can be certified using the Lyapunov function \eqref{eq: Lyapunov gradient cont}.
In the following, we specialize the model in \eqref{eq: continuous time dynamical system gradient} to the particular case of the gradient flow ($\S$\ref{sec: Continuous-time gradient flow}) and its accelerated variant ($\S$\ref{sec: Continuous-time accelerated gradient flow}), where we will use Theorem \ref{thm: Continuous-Time Gradient Descent} to derive the corresponding convergence rates.
%

%

\subsubsection{Continuous-time gradient flow}  \label{sec: Continuous-time gradient flow}
Consider the following ODE for solving \eqref{eq: unconstrained optimization}:
\begin{align} \label{eq: gradient flow}
\dot{x}(t) = - \alpha \nabla f(x(t)), \quad x(0) \in \dom \, f,
\end{align}
where $\alpha>0$. This ODE can be represented in the form of \eqref{eq: continuous time dynamical system gradient} with $n=d$, and $(A,B,C)=(0_d,-\alpha I_d,I_d)$. By selecting $P(t)=p  I_d, \ p \geq 0,$ and applying the dimensionality reduction outlined in Remark \ref{remark: structure}, we obtain the following LMI:
\begin{align}
\begin{bmatrix}
-\dfrac{m_f}{2}\dot{a}(t) & \frac{1}{2}\dot{a}(t)-p  \alpha  \\  \frac{1}{2}\dot{a}(t)-p  \alpha  & -\alpha a(t) 
\end{bmatrix} + \sigma(t) \begin{bmatrix}
\frac{-m_fL_f}{m_f+L_f} & \frac{1}{2} \\ \frac{1}{2}  & \frac{-1}{m_f+L_f} 
\end{bmatrix}\preceq 0.
\end{align}
By elementary calculations, it can be verified that the solution to the corresponding optimization problem in \eqref{eq: SDP gradient cont} is $\sigma(t) = 0$, and $\dot{a}(t) = 2p  + m_f\alpha a(t) + ((m_f\alpha a(t))^2+2p  m_f \alpha a(t))^{1/2}$. Setting $p =0$ and solving the latter ODE with initial condition $a(0) > 0$ yields $a(t) = a(0)\exp(2m_f \alpha t)$. Therefore, the gradient flow \eqref{eq: gradient flow} exhibits the following convergence rate for strongly convex $f$:
\begin{align*}
f(x(t)) - f(x_{\star}) \leq e^{-2m_f \alpha t}(f(x(0))-f(x_{\star})).
\end{align*}
Now we consider convex functions ($m_f=0$) for which the LMI reduces to
\begin{align*}
\begin{bmatrix}
0 & \frac{1}{2}\dot{a}(t)-p  \alpha + \dfrac{\sigma(t)}{2} \\  \frac{1}{2}\dot{a}(t)-p  \alpha + \dfrac{\sigma(t)}{2} & -\alpha a(t)- \dfrac{\sigma(t)}{L_f}
\end{bmatrix} \leq 0.
\end{align*}
This LMI condition is equivalent to the condition $\dot{a}(t) \leq 2p  \alpha -\sigma(t)$. Therefore, by setting $\sigma(t)=0$, we obtain the optimal (fastest growing) $a(t)$, which satisfies the ODE $\dot{a}(t)=2p  \alpha$. Solving this ODE with the initial condition $a(0) \geq 0$, we obtain the following rate bound.
\begin{align*} 
f(x(t)) - f(x_{\star}) \leq \frac{a(0) (f(x(0))-f(x_{\star}))+p  \|x(0)-x_{\star}\|_2^2}{a(0)+2p  \alpha t}.
\end{align*}
%
%
%
\subsubsection{Continuous-time accelerated gradient flow} \label{sec: Continuous-time accelerated gradient flow}
As a second case study, we consider the following second-order ODE for solving \eqref{eq: unconstrained optimization}:
\begin{align} \label{eq: Continuous-time accelerated gradient flow}
\ddot{x}(t) + \dfrac{r}{t} \dot{x}(t) + \nabla f(x(t))=0,\ r>0.
\end{align}
This ODE is the continuous-time limit of Nesterov's accelerated scheme combined with an appropriate time scaling \cite{su2016differential}. The ODE \eqref{eq: Continuous-time accelerated gradient flow} and its variants have been investigated extensively in the literature \cite{alvarez2000minimizing,cabot2009long,attouch2016fast}. A state-space representation of \eqref{eq: Continuous-time accelerated gradient flow} is given by
%
\begin{align} \label{eq: Continuous-time accelerated gradient flow 0}
\dot{\xi}(t) &= \begin{bmatrix}
-\dfrac{r-1}{t}I_d& \dfrac{r-1}{t}I_d \\ 0 & 0
\end{bmatrix} \xi(t) + \begin{bmatrix}
0 \\ -\dfrac{t}{r-1}I_d
\end{bmatrix} \nabla f(y(t)), \\
y(t) &= \begin{bmatrix}
I_d & 0
\end{bmatrix}\xi(t), \nonumber 
\end{align}
%
where $\xi_1 = x$, $\xi_2=x + t/(r-1)\dot{x}$ are the states, $\xi = [\xi_1^\top \quad \xi_2^\top]^\top \in \mathbb{R}^{2d}$ is state vector, and $y=\xi_1$ is the output. The fixed points of \eqref{eq: Continuous-time accelerated gradient flow 0} are  $(\xi_{\star},y_{\star},u_{\star})=([x_{\star}^\top \ x_{\star}^\top]^\top,x_{\star},0)$, where $x_{\star} \in \mathcal{X}_{\star}$ is any optimal solution satisfying $\nabla f(x_{\star})=0$.
%
%

We now analyze the convergence rate of \eqref{eq: Continuous-time accelerated gradient flow 0} for convex functions ($m_f=0$). By selecting $P(t)=\hat{P} I_d$, where $\hat{P} \in \mathbb{S}^2_{++}$ is time-invariant, and applying the dimensionality reduction of Remark \ref{remark: structure}, we arrive at the following $3 \times 3$ LMI,
%

\begin{align*}
\begin{bmatrix}
-\frac{2(r-1)p_{11}}{t} & \frac{(r-1)(p_{11}-p_{21})}{t}& \frac{\dot{a}(t)+\sigma}{2}-\frac{(r-1)a(t)}{2t}-\frac{tp_{12}}{r-1}\\ \frac{(r-1)(p_{11}-p_{21})}{t} & \frac{2(r-1)p_{21}}{t} & \frac{(r-1)a(t)}{2t}-\frac{tp_{22}}{r-1}\\ \frac{\dot{a}(t)+\sigma}{2}-\frac{(r-1)a(t)}{2t}-\frac{t p_{12}}{r-1}& \frac{(r-1)a(t)}{2t}-\frac{tp_{22}}{r-1} & - \frac{\dot{a}(t)}{2L_f} - \frac{\sigma}{L_f}
\end{bmatrix} \preceq 0,
\end{align*}
where $\hat{P}=[p_{ij}]$. A simple analytic solution to the above LMI can be obtained by choosing $p_{11}=p_{12}=p_{21}=0$. With this particular choice, the LMI simplifies to the following conditions:
\begin{align}
\frac{\dot{a}(t)+\sigma(t)}{2}-\frac{(r-1)a(t)}{2t} = 0, \quad p_{22}=(\frac{r-1}{t})^2\frac{a(t)}{2}.
\end{align}
Using the assumption that $p_{22}$ is constant together with the condition $\sigma(t) \geq 0$, the above conditions enforce $a(t) = c t^2$, and $p_{22}=c{(r-1)^2}/2$ for arbitrary $c>0$ along with the condition $r \geq 3$. Using Theorem \ref{thm: Continuous-Time Gradient Descent}, we obtain the convergence rate:
\begin{align*}
f(x(t))-f(x_{\star}) \leq \dfrac{(r-1)^2 \|x(0)-x_{\star}\|_2^2}{2t^2} \quad   r \geq 3.
\end{align*}
This convergence result agrees with \cite[Theorem 5]{su2016differential}. More generally, by allowing the matrix $P(t)$ to be time-dependent, the LMI \eqref{thm: Continuous-Time Gradient Descent 11} can be used to directly answer the following question: How does the convergence rate of the accelerated gradient flow change with the parameter $r$.

\subsection{Algorithm design} 
%
In this subsection, we briefly explore algorithm tuning and design using the developed LMI framework. In particular, we consider robustness as a design criterion.
%
It has been shown in \cite{devolder2014first,lessard2016analysis,cyrus2017robust} that there is a trade-off between an algorithm's rate of convergence and its robustness against inexact information about the oracle. In particular, fast methods such as the Nesterov's accelerated method require first-order information with higher accuracy than standard gradient methods to obtain a solution with a given accuracy \cite{devolder2014first}. To explain this trade-off in our framework, we recall the proof of Theorem \ref{thm: main result}, in which we showed that the following LMI 
\begin{align}
M_k^{0}  + a_{k} M_k^{1} + (a_{k+1}-a_k) M_k^{2} + \sigma_k M_k^{3}\preceq 0 \quad \text{for all} \  k,
\end{align}
ensures that the Lyapunov function satisfies
\begin{align} \label{eq: Lyapunov function decrease margin}
V_k(\xi_{k+1}) \leq V(\xi_{k}) -\sigma_k e_k^\top M_k^3 e_k \quad \mbox{for all} \ k.
\end{align}
In view of \eqref{eq: Lyapunov function decrease margin}, the nonnegative term $\sigma_k e_k^\top M_k^3 e_k$ provides an additional stability margin and hence, safeguards the algorithm against uncertainties in the algorithm or underlying assumptions. Based on this observation, we propose the LMI
\begin{align}\label{eq: LMI robust}
M_k^{0}  + a_{k} M_k^{1} + (a_{k+1}-a_k) M_k^{2} + \sigma_k M_k^{3} + S_k \preceq 0 \quad \text{for all} \  k,
\end{align}
where $S_k$ is any symmetric matrix that satisfies $e_k^\top S_k e_k \geq 0$ for all $k$. In particular, any $S_k \succeq 0$ is a valid choice. By revisiting the proof of Theorem \ref{thm: main result}, the feasibility of the above LMI imposes the stricter condition 
\begin{align} \label{eq: robust Lyapunov decrease}
V_{k+1}(\xi_{k+1}) \leq V_k(\xi_{k}) - e_k^\top (\sigma_k M_k^3 + S_k) e_k \quad e_k^\top S_k e_k \geq 0,
\end{align}
on the decrement of the Lyapunov function. The LMI in \eqref{eq: LMI robust} is the robust counterpart of \eqref{thm: main 1}. Now we can use \eqref{eq: LMI robust} to search for the parameters of the algorithm, considering $S_k$ as a tuning parameter that makes the trade-off between robustness and rate of convergence. 

\medskip

\emph{Robust gradient method.} As an illustrative example, consider the gradient method applied to $f \in \mathcal{F}(m_f,L_f)$. Consider the robust counterpart of the LMI in \eqref{eq: gradient method SC LMI}:
\begin{align} 
\begin{bmatrix}
p\!-\! \rho^2 p& -h p\\  -h p& h^2 p
\end{bmatrix} + \lambda \begin{bmatrix}
\frac{-m_fL_f}{m_f+L_f} & \frac{1}{2} \\ \frac{1}{2}  & \frac{-1}{m_f+L_f}
\end{bmatrix} + \begin{bmatrix}
0 & 0 \\ 0 & s
\end{bmatrix}\preceq 0 \quad s \geq 0.
\end{align}
This LMI is homogeneous in $(p,\lambda,s)$. We can hence assume $p=1$. Using the Schur Complement, the above LMI is equivalent to
\begin{align}
\begin{bmatrix}
-\rho^2-\lambda \frac{m_fL_f}{m_f+L_f} & \frac{\lambda}{2} & 1 \\ \frac{\lambda}{2} & -\frac{\lambda }{m_f+L_f}+s & -h \\ 1 & -h & -1
\end{bmatrix} \preceq 0.
\end{align}
which is now an LMI in $(\rho^2,\lambda,h,s)$. By treating $s$ as a tuning parameter and minimizing the convergence factor $\rho^2$ over $(\lambda,h)$, we can design stepsizes that yield the best convergence rate for a given level of robustness. Conversely, by treating $\rho^2$ as a tuning parameter and maximizing $s$ over $(\lambda,h)$, we can design stepsizes which yield the largest robustness margin for a desired convergence rate.

\emph{Robust Nesterov's accelerated method.} As our design experiment, we consider the Nesterov's accelerated method applied to a strongly convex $f$:
\begin{align}
x_{k+1} &= y_{k} - h \nabla f(y_k), \\ \nonumber
y_k &= x_k + \beta(x_k-x_{k-1}).
\end{align}
Specifically, we consider the robust version of the LMI in \eqref{eq: LMI exponential}, where the matrices $M_k^i \ i\in \{0,1,2,3\}$ are given in \eqref{eq: Nesterov method LMI matrices} and the robustness matrix is chosen as $s I_{3}, \ s \geq 0$. For a given condition number $\kappa_f = \frac{L_f}{m_f}$ and robustness margin $s$, we use the LMI to compute the convergence factor $\rho$ on the grid $(h,\beta) \in [0 \ \frac{2}{L_f}] \times [0 \ 1]$. See $\S$\ref{sec: Numerical bounds for exponential rates}.

\begin{figure}[h]
	\centering
	\includegraphics[width=0.8\textwidth]{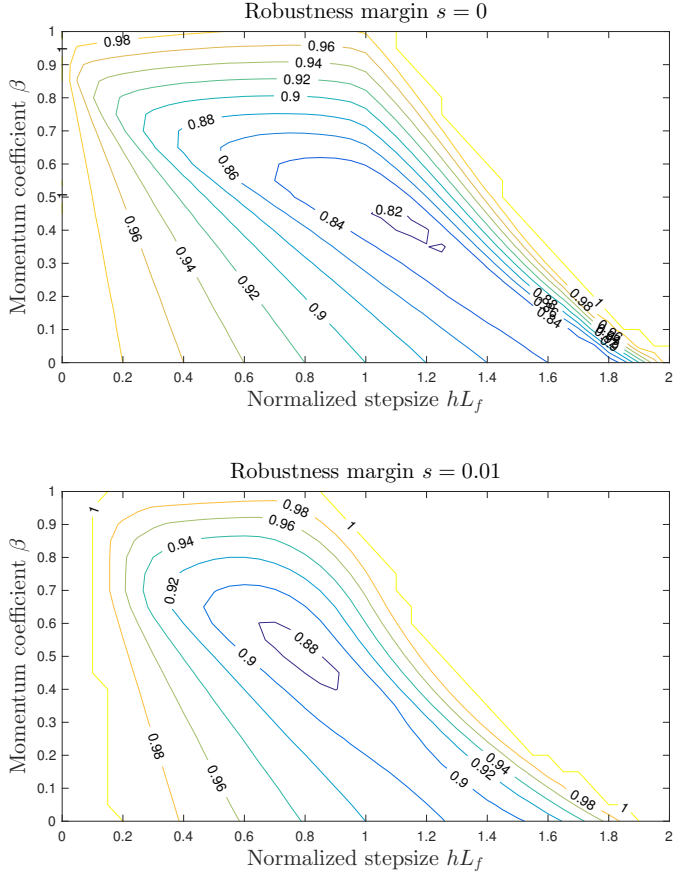}
	\caption{\small Plot of convergence rate $\rho$ of the Nesterov's accelerated method as a function of stepsize $h$ and momentum parameter $\beta$, and for two values of the robustness parameter $s$. Higher values of $s$ increases the robustness of the algorithm at the expense of reduced convergence rate.}
	\label{fig: robust_nesterov_contour}
\end{figure}

In Figure \ref{fig: robust_nesterov_contour}, we plot the contour plots of $\rho$ for $s=0$ and $s=0.01$, respectively. The condition number is fixed at $L_f/m_f=10$. We observe that when $s$ is nonzero, the parameters of the robust algorithm shift towards smaller stepsizes and higher momentum coefficients, leading to higher robustness and lower convergence rates.

\section{Concluding remarks} \label{sec: Conclusions}
In this paper, we have developed an LMI framework, built on the notion of Integral Quadratic Constraints from robust control theory and Lyapunov stability, to certify both exponential and subexponential convergence rates of first-order optimization algorithms. To this end, we proposed a class of time-varying Lyapunov functions that are suitable generating convergence rates in addition to proving stability. We showed that the developed LMI can often be solved in closed form. In particular, we applied the technique to the gradient method, the proximal gradient method, and their accelerated extensions to recover the known analytical upper bounds on their performance. Furthermore, we showed that numerical schemes can also be used to solve the LMI. 

In this paper, we have only used pointwise IQCs to model nonlinearities. More complicated IQCs, such as``off-by-one" IQCs, have shown to be fruitful in improving numerical rate bounds in strongly convex settings \cite{lessard2016analysis}. One direction for future work would be to use these IQCs in tandem with the Lyapunov function used in this paper to further improve the numerical bounds in nonstrongly convex problems. Obtaining better worst-case bounds is useful in a variety of applications, such as Model Predictive Control (MPC). MPC is a sequential optimization-based control scheme, which is particularly useful for constrained and nonlinear control tasks. Implementation of MPC requires the solution of a constrained optimization problem in real time within the sampling period to a specific accuracy determined from stability considerations \cite{richter2012computational}. It is thus important to bound a priori, in a nonconservative manner, the number of iterations needed for a specified accuracy. Improving the numerical rate bounds will allow us to optimize this bound for every problem instance.  More generally, having a nonconservative estimation of convergence rate allows us to compare different algorithms, which must be done by extensive simulations otherwise. We will pursue these applications in future work.



\appendix

\medskip

\section{Symbolic convergence rates for the gradient method} \label{app: Symbolic convergence rates for the gradient method}
The LMI in \eqref{eq: gradient descent weakly convex} with $p=1$ along with the condition $a_{k+1} \geq a_k$ is equivalent to the inequalities
\begin{gather} \label{Symbolic convergence rates for the gradient method}
a_{k+1} \geq a_k, \\
(\dfrac{L_fh^2}{2}-h)a_{k+1}+h^2-\dfrac{\sigma}{L_f} \leq 0, \label{Symbolic convergence rates for the gradient method 1}\\
-\Big(\dfrac{a_{k+1}-a_{k}-2h+\sigma}{2}\Big)^2 \geq 0. \label{Symbolic convergence rates for the gradient method 2}
\end{gather}
The last inequality implies $a_{k+1}=a_{k}+2h-\sigma$. Assuming $a_0=0$ and solving for $a_k$, we obtain $a_k=(2h-\sigma)k$. Therefore, the fastest convergence rate corresponds to the smallest $\sigma$. By substituting $a_k$ in \eqref{Symbolic convergence rates for the gradient method} and \eqref{Symbolic convergence rates for the gradient method 1}, we obtain

\begin{align} \label{app: gradient descent weakly convex}
2h - \sigma \geq 0, \quad (\dfrac{L_fh^2}{2}-h)(2h-\sigma)(k+1)+h^2-\dfrac{\sigma}{L_f} \leq 0.
\end{align}
Since the second inequality must hold for all $k \geq 0$, we must have that ${L_fh^2}/{2}-h \leq 0$ or equivalently, $0 \leq h \leq 2/L_f$. Under this condition, it suffices to ensure the second inequality in \eqref{app: gradient descent weakly convex} holds for $k=0$. This leads to
\begin{align}
\max(0,\dfrac{(L_fh)(L_fh-1)(2h)}{(L_fh)^2-2(L_fh)+2})\leq \sigma \leq 2h.
\end{align}
Therefore, the optimal (minimum) $\sigma$ is
\begin{align}
\sigma_{opt} = \begin{cases}
0 & \mbox{if } 0 \leq hL_f \leq 1 \\
\dfrac{(L_fh)(L_fh-1)(2h)}{(L_fh)^2-2(L_fh)+2} & \mbox{if } 1 < hL_f \leq 2.
\end{cases}
\end{align}
By substituting all the parameters in \eqref{eq: convergence rate discrete}, we obtain
\begin{align}
f(x_k) - f(x_{\star}) \leq \dfrac{\|x_0-x_{\star}\|_2^2}{(2h-\sigma_{opt})k},
\end{align}
which is the same as \eqref{eq: gradient descent weakly convex 1}. \hfill $\square$

\medskip

\section{Proof of Proposition \ref{lemma: IQCs for generalized gradient mapping}}  \label{lemma: IQCs for generalized gradient mapping proof} \emph{Proof of part 1:} Since $g$ is nondifferentiable and convex, it follows from the discussion in $\S$\ref{subsection: IQCs for Proximal Operators} and $\S$\ref{subsection: IQC for projection operators} that $\Pi_{g,h}$ is firmly non-expansive and is hence Lipschitz continuous with Lipschitz parameter equal to one. Further, it is well-known that the map $x \mapsto x-h \nabla f(x)$ is Lipschitz continuous with Lipschitz constant $\gamma_f=\max \{|1-h L_f|,|1- h m_f|\}$; see, for example, \cite{bertsekas2015convex} for a proof. Therefore, the composition $\Pi_{g,h}(x-h\nabla f(x))$ is Lipschitz continuous with parameter $\gamma_f$. In other words, we can write
\begin{align*}
\|\Pi_{g,h}(x-h \nabla f(x))\!-\!\Pi_{g,h}(x_{\star}-h \nabla f(x_{\star}))\|_2^2 \leq \gamma_f^2 \|x-x_{\star}\|_2^2.
\end{align*}
Making the substitution $\Pi_{g,h}(x-h \nabla f(x))=x-h \phi_h(x)$, completing the squares, and rearranging terms yield
\begin{align*} 
\begin{bmatrix}
x-x_{\star} \\ \phi_h(x)-\phi_h(x_{\star})
\end{bmatrix}^\top  \!
\begin{bmatrix}
\dfrac{1}{2h}(\gamma_f^2-1)I_d& \dfrac{1}{2}I_d \\ \dfrac{1}{2}I_d & -\dfrac{h}{2}I_d
\end{bmatrix}		
\begin{bmatrix}
x-x_{\star} \\ \phi_h(x)-\phi_h(x_{\star})
\end{bmatrix} \geq 0.
\end{align*}

\medskip

\emph{Proof of part 2:} First, note that the optimality condition of the proximal operator, defined in \eqref{eq: proximal operator 1}, is that
\begin{align*} 
0 \in \partial g(\Pi_{g,h}(w)) + \dfrac{1}{h} (\Pi_{g,h}(w)-w),
\end{align*}
or equivalently, 
\begin{align} \label{eq: proximal operators optimality condition 1}
0 =T_g(\Pi_{g,h}(w)) + \dfrac{1}{h} (\Pi_{g,h}(w)-w), \ T_g \in \partial g,
\end{align}
where $T_g(w)$ denotes a subgradient of $g$ at $w$. On the other hand, by the definition of the generalized gradient mapping in \eqref{eq: generalized gradient 0}, we have that
\begin{align} \label{eq: generalized gradient explicit 0}
\Pi_{g,h}(y - h \nabla f(y))=y-h  \phi_h(y).
\end{align}
Substituting \eqref{eq: generalized gradient explicit 0} and $w=y-h \nabla f(y)$ in \eqref{eq: proximal operators optimality condition 1}, we can equivalently write $ \phi_h(y)$ as
\begin{align} \label{eq: generalized gradient explicit}
\phi_h(y) = \nabla f(y) + T_g(y-h  \phi_h(y)).
\end{align}
Consider the points $x,y,z  \in  \dom \, f $. We can write
\begin{align*}
f(z) - f(y) &\leq \nabla f(y)^\top (z\!-\!y) \!+\!\dfrac{L_f}{2} \|z\!-\!y\|_2^2, \\
f(y) - f(x) &\leq \nabla f(y)^\top (y\!-\!x)\! -\! \dfrac{m_f}{2}\|y\!-\!x\|_2^2.
\end{align*}
In the first and second inequality, we have used Lipschitz continuity and strong convexity, respectively. Adding both sides yields
\begin{align}  \label{eq: generalized gradient 3}
f(z)\!-\!f(x)\! \leq \!\nabla f(y)^\top (z\!-\!x)+\dfrac{L_f}{2} \|z\!-\!y\|_2^2 \!-\! \dfrac{m_f}{2}\|y\!-\!x\|_2^2.
\end{align}
Further, since $g$ is convex, we can write 
\begin{align}  \label{eq: generalized gradient 4}
g(z) - g(x) \leq T_g(z)^\top (z-x), \quad T_g(z) \in \partial g(z), \quad x,z \in \dom \, g.
\end{align}
Adding both sides of \eqref{eq: generalized gradient 3} and \eqref{eq: generalized gradient 4} for all $x,z \in \dom \, f \cap \dom \, g, \ y \in \dom \, f$, and making the substitutions $z=y-h  \phi_h(y)$ and \eqref{eq: generalized gradient explicit} yields \eqref{eq: composite function inequality}. 

\medskip

\emph{Proof of part 3:} Suppose $\phi_h(y)=0$ for some $y \in \dom \, \phi_h$. It then follows from \eqref{eq: generalized gradient explicit} that $0=\nabla f(y)+T_g(y)$, or equivalently, $0 \in \nabla f(y) + \partial g(y)$. This implies that $y \in \mathcal{X}_{\star}$, according to \eqref{eq: first-order optimality}. Conversely, suppose $y \in \mathcal{X}_{\star}$. We therefore have $\nabla f(y) = -T_g(y)$. Substituting this in \eqref{eq: generalized gradient explicit} yields $\phi_h(y)=T_g(y-h\phi_h(y))-T_g(y)$. Since $T_g$ is monotone, we can write
$$
0 \leq (T_g(y-h\phi_h(y))-T_g(y))^\top (y-h\phi_h(y)-y) = -h \|\phi_h(y)\|_2^2 \quad \mbox{for all } h.
$$
Therefore, we must have that $\phi_h(y)=0$. The proof is now complete. \hfill $\square$

\medskip

\medskip

\section{Proof of Lemma \ref{prop: Quadratic bounds for composite convex functions}} \label{prop: Quadratic bounds for composite convex functions proof}
In order to bound $F(x_{k+1})-F(x_k)$ and $F(x_{k+1})-F(x_{\star})$, we use the inequality
\begin{align} \label{eq: composite function inequality 1}
F(y\!-\!h \phi_h(y))\!-\!F(x) \leq & \phi_h(y)^\top (y\!-\!x)\!-\! \dfrac{m_f}{2} \|y\!-\!x\|_2^2+(\frac{1}{2}L_f h^2\!-\!h) \| \phi_h(y)\|_2^2,
\end{align}
which we proved in Proposition \ref{lemma: IQCs for generalized gradient mapping}. Specifically, we substitute $(x,y)=(x_{\star},y_k)$ in \eqref{eq: composite function inequality 1} to get
\begin{align*} 
&F(x_{k+1})\!-\!F(x_{\star}) \leq (u_k-u_{\star})^\top (y_k\!-\!y_{\star}) \!+\! (\dfrac{L_fh^2}{2}\!-\!h) \|u_k-u_{\star}\|_2^2 \! - \! \dfrac{m_f}{2} \|y_k\!-\!y_{\star}\|_2^2 \\
& = \begin{bmatrix}
y_k-y_{\star} \\ u_k-u_{\star}
\end{bmatrix}^\top \begin{bmatrix}
-\frac{m_f}{2} & \frac{1}{2}\\ \frac{1}{2}& (\frac{1}{2}L_fh^2\!-\!h)
\end{bmatrix}\begin{bmatrix}
y_k-y_{\star} \\ u_k-u_{\star}
\end{bmatrix}. \nonumber \\
& = \begin{bmatrix}
\xi_{k} -\xi_{\star} \\ u_k - u_{\star}
\end{bmatrix}^\top \begin{bmatrix}
C_k & 0 \\ 0 & I_d
\end{bmatrix}^\top \begin{bmatrix}
-\frac{m_f}{2} & \frac{1}{2}\\ \frac{1}{2}& (\frac{1}{2}L_fh^2\!-\!h)
\end{bmatrix}  \begin{bmatrix}
C_k & 0 \\ 0 & I_d
\end{bmatrix} \begin{bmatrix}
\xi_{k} -\xi_{\star} \\ u_k - u_{\star}
\end{bmatrix} \nonumber \\
&=e_k^\top M_k^2 e_k. \nonumber
\end{align*}
where we have used the identities $u_{\star} = \phi_{h}(y_{\star})=0$ and $y_k-y_\star=C_k(\xi_k-\xi_{\star})$. Similarly, in \eqref{eq: composite function inequality 1} we substitute $(x,y)=(x_k,y_k)$ to obtain
\begin{align} \label{thm: proximal 4} 
&F(x_{k+1})\!-\!F(x_k) \leq (u_k-u_{\star})^\top(y_k-x_k) \!+\! (\frac{1}{2}L_fh^2\!-\!h) \|u_k\!-\!u_{\star}\|_2^2\!-\!\dfrac{m_f}{2} \|y_k\!-\!x_k\|_2^2  \\
& = \begin{bmatrix}
y_k-x_k \\ u_k-u_{\star}
\end{bmatrix}^\top \begin{bmatrix}
-\frac{m_f}{2} & \frac{1}{2}\\ \frac{1}{2}& (\frac{1}{2}L_fh^2\!-\!h)
\end{bmatrix}\begin{bmatrix}
y_k-x_k \\ u_k-u_{\star}
\end{bmatrix} \nonumber \\
& = \begin{bmatrix}
\xi_{k} -\xi_{\star} \\ u_k - u_{\star}
\end{bmatrix}^\top \begin{bmatrix}
C_k\!-\!E_k & 0 \\ 0 & I_d
\end{bmatrix}^\top \begin{bmatrix}
-\frac{m_f}{2} & \frac{1}{2}\\ \frac{1}{2}& (\frac{1}{2}L_fh^2\!-\!h)
\end{bmatrix}  \begin{bmatrix}
C_k\!-\!E_k & 0 \\ 0 & I_d
\end{bmatrix} \begin{bmatrix}
\xi_{k} -\xi_{\star} \\ u_k - u_{\star}
\end{bmatrix} \nonumber \\
&=e_k^\top M_k^1 e_k. \nonumber
\end{align}
where we have used $x_\star=y_\star$ and $y_k-x_k=(C_k-E_k)(\xi_k-\xi_{\star})$ to obtain the second equality. Finally, by Proposition \ref{lemma: IQCs for generalized gradient mapping} $u_k = \phi_h(y_k)$ satisfies the pointwise IQC defined by $(Q_{\phi_h},x_{\star},\phi_h(x_{\star}))$. Therefore, we can write
\begin{align} \label{thm: proximal 6} 
e_k^\top M_k^3 e_k &= \begin{bmatrix}
\xi_{k} -\xi_{\star} \\ u_k - u_{\star}
\end{bmatrix}^\top \begin{bmatrix}
C_k & 0 \\ 0 & I_d
\end{bmatrix}^\top Q_{\phi_h} \begin{bmatrix}
C_k & 0 \\ 0 & I_d
\end{bmatrix} \begin{bmatrix}
\xi_{k} -\xi_{\star} \\ u_k - u_{\star}
\end{bmatrix} \\ \nonumber
&= \begin{bmatrix}
y_k - y_{\star} \\ u_k-u_{\star}
\end{bmatrix}^\top Q_{\phi_h}\begin{bmatrix}
y_k - y_{\star} \\ u_k-u_{\star}
\end{bmatrix} \\ \nonumber &\geq 0,
\end{align}
where we have used the identity $y_k-y_{\star}=C_k(\xi_{k}-\xi_{\star})$ to obtain the second inequality. The proof is complete.

\bibliographystyle{siamplain}
\bibliography{Refs}
\end{document}